\newif\ifcolorcomments
\newcommand{\allowcomments}[4]{
\newcommand{#1}[1]{\ifdraft{\ifcolorcomments{\textcolor{#4}{##1 --#3}}\else{\textsl{ ##1 \ --#3}}\fi}\else{}\fi}
}
\newtheorem{theorem}{Theorem}
\newtheorem{lemma}[theorem]{Lemma}
\newtheorem{proposition}[theorem]{Proposition}
\theoremstyle{definition}
\newtheorem{remark}[theorem]{Remark}
\newenvironment{manualtheorem}[1]{%
  \manualtheoreminner
}{\endmanualtheoreminner}
\newcommand{\N}{\mathbb N}
\newcommand{\R}{\mathbb R}
\newcommand{\Z}{\mathbb Z}
\renewcommand{\text}{\textup}
\newcommand{\NPC}[1]{\ignorespaces}
\newif\ifdraft\drafttrue
\def\N{\mathbb N}
\def\Z{\mathbb Z}
\def\R{\mathbb R}
\newcommand*{\myDots}{\ifmmode\mathellipsis\else.\kern-0.07em.\kern-0.07em.\fi}
\allowcomments{\comvika}{VR}{Vika}{green}
\allowcomments{\comnikita}{NS}{Nikita}{blue}
\newcommand {\ignore}[1] {}
\begin{document}

\title{Difference of irrationality measure functions}

\author[Viktoria Rudykh]{Viktoria Rudykh}
\address{Viktoria Rudykh,  Faculty of Mathematics, Technion, Haifa, Israel.}
\email{rudykh.vt@gmail.com}

\author{Nikita Shulga}
\address{Nikita Shulga, Institute for Problems of Information Transmission, Moscow, Russia and Moscow Center for Fundamental and Applied Mathematics, Moscow, Russia.}
\email{nikos1279@gmail.com}
\date{}

\maketitle

\begin{abstract}
For an irrational number $\alpha\in\mathbb{R}$ we consider its irrationality measure function $$ \psi_\alpha(x) = \min_{1\le q\le x,\, q\in\mathbb{Z}} \| q\alpha \|. $$ It is known for all irrational numbers $\alpha$ and $\beta$ satisfying $\alpha\pm\beta\not\in\Z$, there exist arbitrary large values of $t$ with
\begin{equation*}
	| \psi_\alpha(t) - \psi_\beta(t) | \geqslant \left( \sqrt{\tau} - 1\right) \cdot \min( \psi_\alpha(t), \psi_\beta(t) ),
\end{equation*}
where $\tau = \frac{\sqrt{5} + 1}{2}$ and this result is optimal for certain numbers equivalent to $\tau$. Here we prove that for all irrational numbers $\alpha$ and $\beta$, satisfying $\alpha\pm\beta\not\in\Z$, such that at least one of them is not equivalent to $\tau$, there exist arbitrary large values of $t$ with
$$  | \psi_\alpha(t) - \psi_\beta(t) | \geqslant (\sqrt{\sqrt2+1}-1)\cdot \min( \psi_\alpha(t), \psi_\beta(t) ). $$
Moreover, we show that the constant on the right-hand side is optimal.

\end{abstract}

\section{Introduction}

For an irrational number $\alpha\in\mathbb{R}$ we consider its irrationality measure function
$$ \psi_\alpha(x) = \min_{1\le q\le x,\, q\in\mathbb{Z}} \| q\alpha \| ,$$
where $||\cdot || $ denotes the distance to the nearest integer.\\
Let
$$ q_0\le q_1<q_2<\ldots<q_n<q_{n+1}<\ldots $$
be the sequence of the denominators of convergents to $\alpha$. It is a well-known fact (see \cite{Perron} or \cite{Schmidt}) that
\begin{equation}\label{minimum}
 \psi_\alpha(x) = \| q_n\alpha \| \text{  for  } q_n\le x < q_{n+1}. 
 \end{equation}
It is clear from Dirichlet's theorem that for any $\alpha\in\mathbb{R}$ and for every $x\ge1$ we have
\begin{equation*}\label{dirichlet}
\psi_\alpha(x) \le \frac{1}{x}. 
 \end{equation*}
 
In \cite{KM}, Kan and Moshchevitin proved that for any two irrational numbers $\alpha, \beta$ satisfying $\alpha\pm\beta\notin\mathbb{Z}$, the difference 
$$ \psi_\alpha(t) - \psi_\beta(t)  $$
changes its sign infinitely many times as $t\to\infty$.

In 2017 Dubickas obtained a slightly more general statement using the language of combinatorics on words. Namely, he proved
\begin{theorem}\label{dubic}
Let $\alpha$ and $\beta$ be two irrational real numbers satisfying $\alpha\pm\beta\not\in\Z$. Then
$$
\psi_\alpha(n)> ||n\beta ||
$$
for infinitely many $n\in\N$.
\end{theorem}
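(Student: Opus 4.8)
The plan is to deduce Theorem~\ref{dubic} from the Kan--Moshchevitin sign-change result quoted above, by evaluating the desired inequality at the convergent denominators of $\beta$. Write $Q_0\le Q_1<Q_2<\cdots$ for the sequence of denominators of convergents to $\beta$, so that, by the analogue of \eqref{minimum} for $\beta$,
\begin{equation}\label{betamin}
\psi_\beta(x)=\|Q_k\beta\|\qquad\text{for } Q_k\le x<Q_{k+1}.
\end{equation}
In particular $\psi_\beta(Q_k)=\|Q_k\beta\|$: at its own convergent denominators, the running minimum defining $\psi_\beta$ is actually attained by the last term. The result of Kan and Moshchevitin guarantees that $\psi_\alpha(t)-\psi_\beta(t)$ changes sign infinitely often; in particular there are arbitrarily large $t$ with $\psi_\alpha(t)>\psi_\beta(t)$. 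I would fix such a $t$ (as large as desired) and let $Q_k$ be the largest convergent denominator of $\beta$ with $Q_k\le t$, so that $Q_k\le t<Q_{k+1}$.

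The key step is to transfer the strict inequality from $t$ back to the integer $n=Q_k$, using two monotonicity facts pulling in opposite directions. On one hand $\psi_\beta$ is constant on $[Q_k,Q_{k+1})$ by \eqref{betamin}, so $\psi_\beta(t)=\|Q_k\beta\|$; on the other hand $\psi_\alpha$ is non-increasing and $Q_k\le t$, so $\psi_\alpha(Q_k)\ge\psi_\alpha(t)$. Chaining these with the Kan--Moshchevitin inequality gives
\begin{equation*}
\psi_\alpha(Q_k)\ \ge\ \psi_\alpha(t)\ >\ \psi_\beta(t)\ =\ \|Q_k\beta\|.
\end{equation*}
Thus $n=Q_k$ satisfies $\psi_\alpha(n)>\|n\beta\|$. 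Letting $t\to\infty$ forces $k\to\infty$ and hence $Q_k\to\infty$, so this produces infinitely many such $n$, which is exactly the assertion of Theorem~\ref{dubic}.

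Finally, a remark on where the real difficulty lies. The reduction is in fact an equivalence: the statement of Theorem~\ref{dubic} is equivalent to the assertion that $\psi_\alpha(n)>\psi_\beta(n)$ for infinitely many $n$. Indeed, since $\psi_\beta(n)=\min_{1\le q\le n}\|q\beta\|\le\|n\beta\|$ for every $n$, any $n$ with $\psi_\alpha(n)>\|n\beta\|$ automatically gives $\psi_\alpha(n)>\psi_\beta(n)$; conversely the displayed argument shows that a single $n$ with $\psi_\alpha(n)>\psi_\beta(n)$ already yields one (the $\beta$-convergent just below it) with $\psi_\alpha>\|\cdot\,\beta\|$. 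Consequently Theorem~\ref{dubic} carries essentially no information beyond the one-sided half of the Kan--Moshchevitin theorem, and the entire difficulty is concentrated in that sign-change statement; the hypothesis $\alpha\pm\beta\notin\Z$ enters only through it, and is genuinely needed, since $\alpha\pm\beta\in\Z$ forces $\|n\alpha\|=\|n\beta\|$ for all $n$. Were one to insist on a self-contained argument not quoting \cite{KM}, the main obstacle would be to re-establish that $\psi_\alpha-\psi_\beta$ takes positive values arbitrarily far out; the monotonicity manipulation above would then remain as the only additional, and entirely routine, ingredient.
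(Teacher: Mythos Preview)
Your reduction is correct: from the Kan--Moshchevitin sign-change result one obtains arbitrarily large $t$ with $\psi_\alpha(t)>\psi_\beta(t)$, and pulling back to the $\beta$-convergent denominator $Q_k\le t<Q_{k+1}$ yields $\psi_\alpha(Q_k)\ge\psi_\alpha(t)>\psi_\beta(t)=\|Q_k\beta\|$, giving infinitely many integers $n=Q_k$ as required. The equivalence you spell out in the final paragraph is also correct.

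However, note that the paper does \emph{not} supply its own proof of Theorem~\ref{dubic}: the result is quoted from Dubickas \cite{Dubickas}, whose argument proceeds via combinatorics on words rather than by reduction to \cite{KM}. The paper only records the reverse implication (Theorem~\ref{dubic} $\Rightarrow$ Kan--Moshchevitin, via $\psi_\beta(n)\le\|n\beta\|$), so your argument is in fact complementary to the text: together the two observations show that Theorem~\ref{dubic} and the one-sided half of the Kan--Moshchevitin theorem are equivalent, which is a point the paper does not make explicit. Compared with Dubickas's original combinatorial proof, your route is shorter but not self-contained, since it takes the sign-change theorem as a black box; as you rightly observe, all the genuine difficulty is absorbed into that citation.
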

By swapping roles of $\alpha$ and $\beta$ in Theorem \ref{dubic} and noting that $\psi_\beta(n)\leq ||n\beta||$, one easily gets the result of Kan and Moshchevitin. For more results on this topic via combinatorics on words, see \cite{Dubickas}.

In 2019 Moshchevitin \cite{M} proved the following result on a difference of irrationality measure functions.

\begin{theorem}\label{TheoremBB}		Let $\alpha$ and $\beta$ be the two irrational numbers satisfying $\alpha\pm\beta\notin\mathbb{Z}$. Then for every $T\ge 1$ there exists $t\ge T$ such that

\begin{equation}\label{mosh11}
 | \psi_\alpha(t) - \psi_\beta(t) | \geqslant C_1 \cdot \min( \psi_\alpha(t), \psi_\beta(t) ), 
\end{equation}
where $C_1=\sqrt{\frac{\sqrt5+1}{2}}-1\approx0.272^+$.
\end{theorem}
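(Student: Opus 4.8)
The plan is to argue by contradiction, tracking $\psi_\alpha(t)/\psi_\beta(t)$ across the jumps of the two staircase functions. I first record the elementary fact, read off from \eqref{minimum} by a one‑line continued fraction computation, that $\psi_\alpha$ is constant on each $[q_{n-1},q_n)$ and that at $t=q_n$ its value gets multiplied by
\[
\frac{\|q_{n-1}\alpha\|}{\|q_n\alpha\|}=\alpha_{n+1},\qquad\alpha_{n+1}:=[a_{n+1};a_{n+2},\dots],
\]
the $(n+1)$‑st complete quotient of $\alpha=[a_0;a_1,\dots]$ (and similarly for $\beta$). The only arithmetic of complete quotients that is needed is: $\alpha_k\alpha_{k+1}=a_k\alpha_{k+1}+1\ge\alpha_{k+1}+1\ge 2>\tau$; and of any two consecutive complete quotients at least one is $\ge\tau$, since if $\alpha_k<\tau<2$ then $a_k=1$ and $\alpha_k=1+1/\alpha_{k+1}<\tau$ forces $\alpha_{k+1}>1/(\tau-1)=\tau$. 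In particular $(\alpha_k)_k$ has no tail consisting solely of terms $<\tau$, and likewise for $\beta$.

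Suppose the conclusion fails: for some $T$ one has $|\psi_\alpha(t)-\psi_\beta(t)|<(\sqrt\tau-1)\min(\psi_\alpha(t),\psi_\beta(t))$ for all $t\ge T$. With $\phi(t):=\log\psi_\alpha(t)-\log\psi_\beta(t)$ and $L:=\log\tau$ this says exactly $\phi(t)\in(-L/2,L/2)$ for $t\ge T$. The step function $\phi$ is constant between consecutive terms of the merged increasing list of convergent denominators of $\alpha$ and of $\beta$; it drops by $\log\alpha_{n+1}$ at a denominator of $\alpha$, rises by the analogous log at a denominator of $\beta$, and at a shared denominator changes by the signed difference. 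Since $\phi$ never leaves an interval of length $L$, every such change is $<L$ in absolute value. Assume for the moment that $\alpha$ and $\beta$ have only finitely many common convergent denominators (the general case is the next paragraph). Then no two consecutive denominators $q_n(\alpha)<q_{n+1}(\alpha)$ beyond some point can have no $\beta$‑denominator between or at them: otherwise, comparing the plateau of $\phi$ just before $q_n(\alpha)$ with the one just after $q_{n+1}(\alpha)$ — on both of which $\psi_\beta$ takes the same value — shows $\phi$ falls by $\log(\|q_{n-1}\alpha\|/\|q_{n+1}\alpha\|)=\log(\alpha_{n+1}\alpha_{n+2})\ge\log2>L$, impossible. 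By the symmetric statement for $\beta$, past some point the two denominator sequences strictly interleave, $\cdots<q_n<r_n<q_{n+1}<r_{n+1}<\cdots$. But then the downward jumps of $\phi$ sit exactly at the $q_n$, of sizes $\log\alpha_k$ with $k$ running over all large indices, and each is $<L$; hence $\alpha_k<\tau$ for every large $k$, contradicting the previous paragraph. So the theorem holds in this case. Chasing the equalities shows the constant is optimal: if $\alpha$ is equivalent to $\tau$ then $\alpha_k=\tau$ eventually, $\phi$ may live on the closed interval $[-L/2,L/2]$, and a companion $\beta$ equivalent to $\tau$ with $\alpha\pm\beta\notin\Z$ whose convergent denominators interleave those of $\alpha$ (two interleaved Fibonacci‑type denominator sequences, suitably chosen) makes $|\psi_\alpha(t)-\psi_\beta(t)|$ equal to, and never greater than, $(\sqrt\tau-1)\min(\psi_\alpha(t),\psi_\beta(t))$.

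The remaining case — infinitely many common convergent denominators — is where I expect the real work to lie, and it is the only place the hypothesis $\alpha\pm\beta\notin\Z$ is genuinely used. If $\alpha$ and $\beta$ shared all large convergent denominators up to a fixed index shift, then the recursion $q_{j+1}=a_{j+1}q_j+q_{j-1}$ would force their partial quotients to agree from some index on, and comparing numerators of convergents via $p_jq_{j-1}-p_{j-1}q_j=\pm1$ would yield $\alpha-\beta\in\Z$ or $\alpha+\beta\in\Z$, against the hypothesis (which is plainly necessary, since $\alpha\pm\beta\in\Z$ makes $\psi_\alpha\equiv\psi_\beta$). Hence common denominators occur only in finite blocks; inside a block the partial quotients of $\alpha$ and $\beta$ coincide, so a shared jump multiplies $\psi_\alpha/\psi_\beta$ by a ratio of complete quotients that confinement forces into $(\tau^{-1},\tau)$, while the stretches between blocks are governed by the interleaving analysis above. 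Welding these together into a contradiction in every configuration — ensuring that two consecutive complete quotients $<\tau$ always turn up, or else that confinement fails already within a block — is the delicate part; it is, in essence, the argument of Moshchevitin \cite{M}, and it can alternatively be recast in the combinatorics‑on‑words framework surrounding Theorem \ref{dubic}, where the configuration to be excluded is precisely coincidence of tails.
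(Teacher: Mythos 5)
Your confinement device --- recasting the assumed failure as $\phi(t)=\log\psi_\alpha(t)-\log\psi_\beta(t)\in\bigl(-\tfrac12\log\tau,\tfrac12\log\tau\bigr)$ for all large $t$ --- is a transparent repackaging of Lemma \ref{lemma6}, and your disposal of the case where $\alpha$ and $\beta$ eventually share no convergent denominator is complete and correct: there the downward jumps of $\phi$ occur at unshared $q_n$ and equal $\log\alpha_{n+1}$ exactly, so confinement forces $\alpha_{n+1}<\tau$ for every large $n$, contradicting the fact that two consecutive complete quotients cannot both be $<\tau$. The genuine gap is that the entire difficulty of the theorem sits in the case you leave open, namely infinitely many shared denominators (infinitely many letters $B$ in the word $W$): at a shared denominator the jump of $\phi$ is $\log\beta_{s+1}-\log\alpha_{n+1}$, and confinement only bounds this difference, giving no control on either complete quotient separately. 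Your closing paragraph is a plan rather than a proof, and its final appeal --- that the welding ``is, in essence, the argument of Moshchevitin'' --- defers precisely the step that constitutes the theorem (which the present paper itself only quotes from \cite{M}, importing Lemmas \ref{lemma1}, \ref{lemma2} and \ref{lemma6} from there rather than reproving them).

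Concretely, to close that case along your own lines you would need: (a) that only finitely many \emph{adjacent} pairs of shared denominators occur (Lemma \ref{lemma1}); your sketch excludes only the two denominator tails coinciding entirely, which does not rule out infinitely many shared blocks of length $2$, where confinement is silent; (b) that between two consecutive letters $B$ there is at most one $Q$ and at most one $T$ --- this part does follow from your jump bound, since two unshared $q_r<q_{r+1}$ force $\alpha_{r+1},\alpha_{r+2}<\tau$; (c) an argument excluding the pattern $B^{n-1}_{s-1}Q^{n}B^{n+1}_{s}$ with $a_{n+1}=1$, which is Lemma \ref{lemma2} and rests on $\alpha\pm\beta\notin\Z$, not on confinement; and (d) the arithmetic identity that in the pattern $B^{n-1}_{s-1}Q^nT_sB^{n+1}_{s+1}$ (or with $T$ before $Q$) the relations $q_{n+1}=q_n+q_{n-1}$, $t_{s+1}=t_s+t_{s-1}$ and $q_{n\pm1}=t_{s\pm1}$, available because (b) forces $a_{n+1}=b_{s+1}=1$, yield $q_n=t_s$, contradicting unsharedness. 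Items (a), (c) and (d) are absent from your proposal; without them the contradiction ``in every configuration'' is not established, so the argument as written does not prove the theorem.
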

It was also shown that constant $C_1$ in \eqref{mosh11} is optimal: author provided an example of two specific numbers $\alpha$ and $\beta$, both equivalent to golden ratio, such that the constant $C_1$ in \eqref{mosh11} can not be improved. The notion of equivalence is given in \eqref{defequiv}.

For a pair of irrational numbers $\alpha, \beta$ satisfying $\alpha\pm\beta\notin\mathbb{Z}$, define $C_{\alpha,\beta}$ as 
$$
C_{\alpha,\beta} = \sup \{ C:  | \psi_\alpha(t) - \psi_\beta(t) | \geqslant C \cdot \min( \psi_\alpha(t), \psi_\beta(t) )  \text {\,\, for infinitely many $t\in\N$} \}.
$$
Using this notation, Theorem \ref{TheoremBB} can be reformulated as follows.
\begin{manualtheorem}{\ref{TheoremBB}'}\label{TheoremB}		Let $\alpha$ and $\beta$ be the two irrational numbers satisfying $\alpha\pm\beta\notin\mathbb{Z}$. Then 
\begin{equation*}\label{mosh1}
C_{\alpha,\beta} \geq C_1,
\end{equation*}
where $C_1=\sqrt{\frac{\sqrt5+1}{2}}-1\approx0.272^+$.
\end{manualtheorem}
In new notation the statement about the optimality of $C_1$ means that there exist two specific numbers $\alpha$ and $\beta$, both equivalent to golden ratio, such that $C_{\alpha,\beta} =C_1$. It turns out that this example is in some sense unique. If we assume that one of the numbers is not equivalent to the golden ratio, then constant $C_1$ can be significantly improved. To be more precise, the main result of present paper is 
\begin{theorem}\label{maintheorem}
Let $\alpha$ and $\beta$ be the two irrational numbers satisfying $\alpha\pm\beta\notin\mathbb{Z}$. If at least one of the numbers $\alpha$ or $\beta$ is not equivalent to $\tau=\frac{\sqrt5+1}{2}$, then
\begin{equation}\label{mainineq}
C_{\alpha,\beta} \geq C_2,
\end{equation}
 where $C_2=\sqrt{\sqrt2+1}-1 \approx0.5537^+$.
\end{theorem}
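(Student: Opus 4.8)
I would argue by contradiction. Suppose $C_{\alpha,\beta}<C_2$. Then there are a constant $C\in(C_{\alpha,\beta},C_2)$ and a threshold $T_0$ with $|\psi_\alpha(t)-\psi_\beta(t)|<C\min(\psi_\alpha(t),\psi_\beta(t))$ for all $t\ge T_0$; equivalently, writing $\rho:=1+C$, the \emph{tube condition} $\rho^{-1}<\psi_\alpha(t)/\psi_\beta(t)<\rho$ holds for every $t\ge T_0$, and the whole point is that $\rho^{2}<(1+C_2)^{2}=\sqrt2+1$. After interchanging $\alpha$ and $\beta$ if necessary, assume $\alpha$ is not equivalent to $\tau$. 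Recall from \eqref{minimum} the staircase picture: with $q_n$ the convergent denominators of $\alpha$, $a_n$ its partial quotients and $\xi_n:=\|q_n\alpha\|$, we have $\psi_\alpha\equiv\xi_n$ on $[q_n,q_{n+1})$, the multiplicative jump of $\psi_\alpha$ at $q_n$ is the tail continued fraction $\xi_{n-1}/\xi_n=[a_{n+1};a_{n+2},a_{n+3},\dots]$ (from $\xi_{k-1}=a_{k+1}\xi_k+\xi_{k+1}$), and $\tfrac12<q_n\xi_{n-1}<1$. I use the parallel notation $q'_m,b_m,\eta_m$ for $\beta$.

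The next step is two short lemmas. \textbf{(i) Forced common jumps.} If $\psi_\alpha$ jumps by more than $\rho^{2}$ at some $q\ge T_0$, then $q$ is a denominator of $\beta$ as well, and the jump of $\psi_\beta$ at $q$ lies in $[\,(\text{jump of }\psi_\alpha)/\rho^{2},\ (\text{jump of }\psi_\alpha)\cdot\rho^{2}\,]$; this is immediate from the tube inequality evaluated just before $q$ and at $q$ (if $\psi_\beta$ did not drop at $q$ the tube would break), and it is symmetric in $\alpha$ and $\beta$. \textbf{(ii) Dichotomy.} If every jump of $\psi_\alpha$ past some point is $\le\rho^{2}$, then $\alpha\sim\tau$: a partial quotient $a_{n+1}\ge2$ gives a jump $\ge2$ which, being $<\sqrt2+1<\tfrac52$, forces $a_{n+1}=2$; but then $2+1/[a_{n+2};\dots]<\sqrt2+1$ gives $[a_{n+2};\dots]>1/(\sqrt2-1)=\sqrt2+1>\rho^{2}$, i.e.\ the next jump exceeds $\rho^{2}$, a contradiction; so $a_n=1$ for all large $n$. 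Hence, since $\alpha\not\sim\tau$, $\psi_\alpha$ has infinitely many jumps $>\rho^{2}$, each (by (i)) at a denominator common to $\alpha$ and $\beta$; running the computation in (ii) one step further, unless all large partial quotients of $\alpha$ equal $2$ — the \emph{silver case}, where $\alpha$ is equivalent to $1+\sqrt2$ — infinitely many of those jumps are in fact $\ge\tfrac52$.

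The core is to turn this structure into a violation of the tube. Strictly between two consecutive common denominators of the above type every partial quotient of $\alpha$ equals $1$ — a partial quotient $\ge2$ there would manufacture, by the computation in (ii), a further common big-jump denominator in between — so $\psi_\alpha$ is a "golden" staircase on those stretches; the same reasoning applied to $\beta$ through (i), together with $\tfrac12<q_n\xi_{n-1}<1$ and its $\beta$-analogue, locks the two denominator sequences into an almost rigid interleaving there, with all the freedom concentrated at the common big-jump points. Following the walk $t\mapsto\log(\psi_\alpha(t)/\psi_\beta(t))$ along the merged denominator sequence — it stays in an interval of multiplicative width $\rho^{2}<\sqrt2+1$, drops by the log of each $\alpha$-jump, rises by the log of each $\beta$-jump — and pushing it through successive common big-jump points while using that $\psi_\beta$ must actually be the irrationality measure function of some real number, one is driven into the silver case for \emph{both} $\alpha$ and $\beta$. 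But then every large jump of $\psi_\alpha$ equals $\sqrt2+1>\rho^{2}$, so by (i) $\psi_\alpha$ and $\psi_\beta$ have the same jump points from some point on; a short continued-fraction argument (two numbers equivalent to $1+\sqrt2$ with this property must satisfy $\beta\in\{\alpha,1-\alpha\}+\Z$) then contradicts $\alpha\pm\beta\notin\Z$. The threshold is sharp because $\sqrt2+1=[2;\overline2]$ is the least possible value of $\limsup_n[a_{n+1};a_{n+2},\dots]$ over all $\alpha\not\sim\tau$, attained precisely when $\alpha$ is equivalent to $1+\sqrt2$; this is exactly why $\rho^{2}<\sqrt2+1$ activates (i) everywhere in the silver case, and it is the analogue of Moshchevitin's $\sqrt\tau$ from the golden tail $[1;\overline1]$ — in the language of Theorem \ref{dubic}, the passage from the Fibonacci word to the Pell word.

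I expect the main obstacle to be precisely the core step just described. Lemma (i) is only barely tight, so no single local estimate closes the gap: the contradiction must be squeezed out by iterating the local constraints against the global coupling $\tfrac12<q_n\xi_{n-1}<1$, and the awkward subcase is the near-silver configurations, where $\alpha$ and $\beta$ are almost, but not quite, equivalent to $1+\sqrt2$ and share almost all of their large denominators — one must verify that the only way to avoid a point with $\max(\psi_\alpha/\psi_\beta,\psi_\beta/\psi_\alpha)\ge\rho$ is to be exactly in the silver case, which is then excluded. Pinning down $1+\sqrt2$ as the sharp constant — rather than the cruder $2$ or $\tfrac52$ coming out of the easy estimates — is the main analytic content. (The optimality asserted in the abstract would be a separate construction, parallel to Moshchevitin's golden example: take $\alpha=[0;\overline2]$ and a suitably offset companion $\beta=[0;3,\overline2]$, both equivalent to $1+\sqrt2$ and with $\alpha\pm\beta\notin\Z$; here all jumps equal $1+\sqrt2=\rho^{2}$, so (i) does not force the denominators to coincide, and $\max(\psi_\alpha(t)/\psi_\beta(t),\psi_\beta(t)/\psi_\alpha(t))$ oscillates with $\limsup$ exactly $\sqrt{\sqrt2+1}=1+C_2$, giving $C_{\alpha,\beta}=C_2$.)
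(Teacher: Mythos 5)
Your skeleton is the right one and your two local lemmas are exactly the paper's tools: the tube condition is assumption \eqref{eq1}, your lemma (i) is Lemma \ref{lemma6} combined with Proposition \ref{prop1} (a jump of $\psi_\alpha$ exceeding $\rho^{2}$ can only happen at a denominator shared with $\beta$, and then the two jumps agree within a factor $\rho^{2}$), and your dichotomy (ii) is Lemma \ref{lemma3}. But the step you yourself flag as ``the main obstacle'' is essentially the entire proof, and the walk heuristic you offer in its place cannot close it. The paper's core is a six-way case analysis of the subwords $B\ldots B$ between consecutive shared denominators: from $t_{s-1}=q_{n-1}$ and $t_{s+m}=q_{n+k}$, together with the fact (Proposition \ref{prop2}) that the intermediate partial quotients are $1$ with at most a final $2$, one gets the continuant identity $q_nF_{k'+1}+q_{n-1}F_{k'}=t_sF_{m'+1}+t_{s-1}F_{m'}$ and hence an exact expression for $t_s$ as a rational combination of $q_n$ and $q_{n-1}$; each case is then killed by a two-sided numerical estimate with almost no slack (e.g.\ $8/5<\alpha_{n+2}<1.55$, or $3/5<1/\beta_{s+1}<0.59$). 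Your walk on $\log(\psi_\alpha/\psi_\beta)$ only records the constraint of lemma (i) at each shared denominator, which is strictly weaker than the arithmetic identity $t_s=\tfrac{F_{k'+1}}{F_{m'+1}}q_n+\tfrac{F_{k'}-F_{m'}}{F_{m'+1}}q_{n-1}$, and it is the latter, not the former, that produces the contradictions.

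A concrete configuration your argument does not see: the subword $B^{n-1}_{s-1}QQTB^{n+2}_{s+1}$ with $a_{n+1}=a_{n+2}=b_{s+1}=1$ (the paper's Case 2.4). Here every jump of either function inside the stretch lies in $(1,2)$, hence below $\rho^{2}$, so your ``big jump at a common denominator'' machinery never fires, there is no drift toward the silver case, and the configuration could a priori repeat forever. The paper excludes it only by computing $t_s=2q_n$ exactly and then deriving $3/5<1/\beta_{s+1}<0.59$ --- an estimate that additionally needs information about the partial quotient $b_{s+3}$ coming from the \emph{next} subword. Note also that your intended endgame is inverted relative to the paper's: the paper proves that the letters $B$ eventually disappear and then applies Lemma \ref{lemma6} once at a non-shared big jump, whereas you want to force all denominators to eventually coincide and invoke the mechanism behind Lemma \ref{lemma1}; neither terminus is reachable without the case analysis. (As a side remark, your parenthetical optimality pair is not right either: the interleaved denominator ratios of $[0;\overline{2}]$ and $[0;3,\overline{2}]$ tend to $\sqrt2$ and $1+1/\sqrt2$, giving a constant about $0.707$ rather than $C_2$; the paper's Theorem \ref{optimality} needs a Kronecker-type construction to tune the offset to exactly $\sqrt{\sqrt2+1}$.)
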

We also prove that the constant $C_2$ in the previous theorem can not be improved.
\begin{theorem}\label{optimality}
The constant $C_2$ in Theorem \ref{maintheorem} is optimal in a sense that there exist two irrational numbers $\theta$ and $\omega$, such that $\theta\pm\omega\notin\Z$, at least of them is not equivalent to $\tau$ and
$$
C_{\theta,\omega} =  C_2.
$$
\end{theorem}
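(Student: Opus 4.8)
The plan is to reduce everything, via Theorem~\ref{maintheorem}, to producing a single good pair. I would take $\theta=\sqrt2-1$, whose continued fraction is $[0;2,2,2,\dots]$, so that $\theta\in\Q(\sqrt2)$ and $\theta$ is not equivalent to $\tau\in\Q(\sqrt5)$; in particular the hypothesis ``at least one of the numbers is not equivalent to $\tau$'' of Theorem~\ref{maintheorem} holds no matter which $\omega$ we pair with $\theta$. Moreover $\theta-\omega\notin\Z$ and $\theta+\omega\notin\Z$ as soon as the continued fraction of $\omega$ is not eventually $(2,2,2,\dots)$, because a number whose fractional part equals that of $\theta$, or $1$ minus that of $\theta$, has an eventually-all-$2$'s expansion. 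Hence for any such $\omega$ the pair $(\theta,\omega)$ is admissible and Theorem~\ref{maintheorem} already gives $C_{\theta,\omega}\ge C_2$; so the only content left is to construct one $\omega$, with non-eventually-constant expansion, for which $C_{\theta,\omega}\le C_2$.

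First I would unpack that target bound. By definition $C_{\theta,\omega}=\limsup_{t\to\infty}\bigl(\max(\psi_\theta(t),\psi_\omega(t))/\min(\psi_\theta(t),\psi_\omega(t))-1\bigr)$, since on any interval on which both functions are constant -- and by \eqref{minimum} they are step functions with jumps only at the convergent denominators -- the ratio $\lvert\psi_\theta-\psi_\omega\rvert/\min(\psi_\theta,\psi_\omega)$ equals $\max/\min-1$. Thus $C_{\theta,\omega}\le C_2$ says exactly that for all large $t$ the two numbers $\psi_\theta(t),\psi_\omega(t)$ differ in size by a factor at most $(1+C_2)(1+o(1))=\sqrt{\sqrt2+1}\,(1+o(1))$. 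Using $\tfrac1{q_{n+1}+q_n}<\|q_n\alpha\|<\tfrac1{q_{n+1}}$, this becomes a statement about the interleaving of the denominator sequences $(q_n(\theta))$ and $(q_m(\omega))$, refined by the exact sizes of $\|q_n\theta\|$ and $\|q_m\omega\|$: one wants the next convergent denominator of $\theta$ past $t$ and the next convergent denominator of $\omega$ past $t$ to be always within a factor $\sqrt{\sqrt2+1}+o(1)$ of each other, up to a correction coming from those exact sizes.

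The heart of the proof is the construction of $\omega$. Since $q_n(\theta)\asymp(\sqrt2+1)^n$, consecutive denominators of $\theta$ are spaced by $\log(\sqrt2+1)$ on the logarithmic scale, and I would build $\omega$ so that its convergent denominators interlock with those of $\theta$, each new one landing essentially at the geometric mean of two consecutive denominators of $\theta$ -- i.e. midway between them logarithmically, which is what produces the exponent $\tfrac12$ in $C_2=\sqrt{\sqrt2+1}-1$. Concretely: generate the continued fraction of $\omega$ inductively, at each stage appending the next partial quotient to whichever of $\theta,\omega$ currently has the smaller last denominator and choosing it from a bounded set (essentially $\{1,2\}$) so as to land nearest the prescribed target. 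Because the target logarithmic spacing is an irrational multiple of $\log(\sqrt2+1)$, integer denominators can only shadow it, so the procedure must occasionally insert corrective partial quotients to damp the accumulated drift; crucially, $\omega$ is kept non-periodic, which makes the exact sizes $\|q_m\omega\|$ fluctuate within a small window, and it is this fluctuation that is exploited to keep $\psi_\omega(t)$ within the allowed factor of $\psi_\theta(t)$ even though the denominators cannot sit exactly at the geometric means. Non-periodicity also guarantees $\omega$'s expansion is not eventually $(2,2,2,\dots)$, so the admissibility conditions above hold.

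Finally I would verify $C_{\theta,\omega}\le C_2$ by a finite case analysis over the local shapes of $(\psi_\theta,\psi_\omega)$ near a large $t$ in the merged denominator sequence: the interlock property together with the fluctuation estimates gives ratio $\le C_2+o(1)$ in every case, while along the generic configuration the value $C_2$ is attained infinitely often, so $\limsup=C_2$. The hard part will be precisely this last step and the construction it rests on -- showing that the two denominator sequences can be interlocked at the correct, irrational, logarithmic spacing for all time, and that the corrective partial quotients, as well as every rare local configuration, never push the ratio above $C_2+o(1)$. This is a delicate balancing act of Sturmian/Ostrowski type, and it is exactly here that the square root of the silver ratio $\sqrt2+1$ dictates the structure of the extremal example, paralleling the role of $\sqrt\tau$ in Moshchevitin's construction for $C_1$.
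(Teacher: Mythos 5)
Your reduction is sound as far as it goes: Theorem~\ref{maintheorem} supplies $C_{\theta,\omega}\ge C_2$ for any admissible partner of a $\sqrt2$-equivalent $\theta$, so the whole theorem rests on exhibiting one $\omega$ with $C_{\theta,\omega}\le C_2$, and you correctly identify that this forces the denominators of $\omega$ to interlace with those of $\theta$ at the logarithmic midpoint, which is where the square root in $C_2=\sqrt{\sqrt2+1}-1$ comes from. But the construction itself --- which is the entire content of the proof --- is left as an admitted ``hard part,'' and the sketch you give for it cannot be completed, because it is built on a requirement that is incompatible with the bound you need. You insist that $\omega$ be non-eventually-periodic (in particular not eventually all $2$'s) and that the resulting ``fluctuation'' of $\|q_m\omega\|$ be exploited. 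In fact any $\omega$ with $C_{\theta,\omega}\le C_2$ must have tail $\overline{2}$: once the denominators alternate as $\dots<Q_n<t_s<Q_{n+1}<t_{s+1}<\dots$, the two ratio bounds required on the adjacent intervals $[t_s,Q_{n+1})$ and $[Q_{n+1},t_{s+1})$ have product $\xi_n/\xi_{n+1}=\theta_{n+2}=\sqrt2+1=(1+C_2)^2$ exactly (by \eqref{tailxi} and the all-$2$ tail of $\theta$), so each must equal $(1+C_2)(1+o(1))$; this pins $\eta_s=\xi_n/(1+C_2)\cdot(1+o(1))$ for every $s$, hence $\eta_s/\eta_{s+1}=\beta_{s+2}\to\sqrt2+1$, hence $b_m=2$ for all large $m$. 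So the fluctuation you want to lean on is precisely what would push the ratio above $C_2$ infinitely often.

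The correct construction avoids both the non-periodicity and the drift problem you anticipate. The paper takes $\omega=[0;b_1,\dots,b_l,\overline{2}]$, so that its convergent denominators eventually satisfy the \emph{same} recurrence $X_{n+1}=2X_n+X_{n-1}$ as the denominators $Q_n$ of $\theta$; consequently the ratio $Q_n/X_n$ \emph{converges} (no accumulated drift, no corrective partial quotients), and its limit is a prescribed function of the seed $(X_0,X_1)=(U,V)$. Kronecker's theorem lets one choose integers $U,V$ with $V+U\theta^{-1}$ within $\varepsilon$ of $\sqrt\theta$, which places the limit ratio within $O(\varepsilon)$ of $\sqrt{\sqrt2+1}$ and makes both $\xi_{n-1}/\eta_n$ and $\eta_n/\xi_n$ tend to $\sqrt{\sqrt2+1}+O(\varepsilon)$; the finite prefix $b_1,\dots,b_l$ is just read off from $X_{k-1}/X_k$. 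Admissibility for such an eventually-all-$2$'s $\omega$ is not an obstacle: $\theta\pm\omega\in\Z$ only for the two specific residues $\omega\equiv\pm\sqrt2\pmod{\Z}$, which are excluded since for them the denominators of $\omega$ would eventually coincide with a fixed shift of the $Q_n$, contradicting $Q_n/X_n\to\sqrt{\sqrt2+1}\notin(\sqrt2+1)^{\Z}$. Your proposal, by contrast, would still need to prove that a greedy Sturmian-type interlocking with bounded partial quotients keeps every local ratio below $C_2+o(1)$ forever, and the argument above shows it cannot.
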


The paper is organised as follows. In Section \ref{sec2} we provide some auxiliary results from the theory of continued fractions and introduce necessary construction from combinatorics on words. In Section \ref{sec3} we prove Theorem \ref{maintheorem}. Section \ref{sec4} is devoted to the proof of Theorem \ref{optimality}. Finally, in Section \ref{sec5} we provide some complimentary results about the value of $C_{\alpha,\beta}$ for some specific cases of $\alpha$ and $\beta$.

\section{Preliminaries}\label{sec2}
Throughout, we assume that $\alpha\notin\mathbb{Q}$ and we denote its continued fraction expansion as
\begin{equation*}
\alpha =  a_0 + \cfrac{1}{a_1+\cfrac{1}{a_2+\cdots}}=[a_0;a_1,a_2,\ldots], \,\,\, a_j \in \mathbb{Z}_+.
\end{equation*}
By $\alpha_r$ we denote the tail of continued fraction
\begin{equation}\label{taila}
\alpha_r=  [a_r;a_{r+1},a_{r+2},\ldots].
\end{equation}
Similarly, let $$\beta = [b_0; b_1,b_2,\ldots]$$ and $$\beta_m = [b_m; b_{m+1}, b_{m+2}, \ldots].$$
We call two irrational numbers $\alpha$ and $\beta$ \textit{equivalent} and write
\begin{equation}\label{defequiv}
\alpha\sim_c\beta
\end{equation}
 if there exist $n,m\in\N$, such that $\alpha_n=\beta_m$. We denote the equivalence as $\alpha\sim_c\beta$ to distinguish it from asymptotic equivalence $\sim$. 

For irrational numbers $\alpha$ and $\beta$ we denote the denominators of their convergents by $q_n$ and $t_m$ respectively.\\
Define 
$$\xi_n = \psi_\alpha( q_n ) \text{\,\,\,\,\, and \,\,\,\,\,\,} \eta_s = \psi_\beta ( t_s ).$$
It is well-known that 
\begin{equation*}
q_n\alpha - \frac{(-1)^n}{q_n\alpha_{n+1}+q_{n-1}}\in\mathbb{Z}.
\end{equation*}
Using this fact as well as \eqref{minimum}, we see that

\begin{equation}\label{defal}
\psi_\alpha(n) = \frac{1}{q_r\alpha_{r+1}+q_{r-1}}=\frac{1}{q_{r+1}+\frac{q_r}{\alpha_{r+2}}},
\end{equation}
where  $r\geqslant0$ is the largest integer satisfying $q_r\leqslant n$.
Similarly,
\begin{equation*}\label{defbe}
\psi_\beta(n) = \frac{1}{t_l\beta_{l+1}+t_{l-1}}=\frac{1}{t_{l+1}+\frac{t_l}{\beta_{l+2}}},
\end{equation*}
where  $l\geqslant0$ is the largest integer satisfying $t_l\leqslant n$.\\
From the definition of $\xi_n$ and \eqref{defal}, we immediately see that
\begin{equation}\label{tailxi}
 \frac{ \xi_{n-1}}{\xi_{n}} = \alpha_{n+1} .
\end{equation}
For the proof of our results, we use language of combinatorics on words. In connection with irrationality measure functions it was introduced by Dubickas in \cite{Dubickas} and was extensively used by the second author in \cite{Shulga}.\\
 Consider a union $U=D_{\alpha} \cup D_{\beta}$ of two sequences $D_\alpha = \{ q_0 = 1 \le q_1 < q_2 < \ldots \}$ and $D_\beta = \{ t_0 = 1 \le t_1 < t_2 < \ldots \} $ of denominators of convergents of $\alpha$ and $\beta$. We construct an infinite word $W$ on an alphabet $\{ B^*_*, Q^*, T_* \}$, where $*$ will match some indices of elements from $D_\alpha$ and $D_\beta$, by the following procedure. The first element of $U$ is $1 = q_0 = t_0$. It belongs to both sequences, so we start our infinite word with $B^0_0$. Then, we are looking at the next element of $U$. There are three potential situations:
\begin{enumerate}
 \item If the next element of $U$ is $q_j$ from the set $D_\alpha$ and it does not belong to the set $D_\beta$, we write the letter $Q^j$; \\
 \item If the next element of $U$ is $t_i$ from the set $D_\beta$ and it does not belong to the set $D_\alpha$, we write the letter $T_i$; \\
 \item If the next element of $U$ is equal to $q_j$ from $D_\alpha$ and also to $t_i$ from $D_\beta$, we write the letter $B^j_i$.
 \end{enumerate} Sometimes we will omit indices of letters $B,Q$ and $T$ if this does not cause ambiguity.

From now on we only consider irrational numbers $\alpha$ and $\beta$ with $\alpha\pm\beta\notin\mathbb{Z}$.  This condition on $\alpha$ and $\beta$ implies two simple lemmas, reformulated in the word notation.
\begin{lemma}[\cite{M}, Hilfssatz 3.i]\label{lemma1}
There exist only finitely many subwords $BB$ in a word $W$.
\end{lemma}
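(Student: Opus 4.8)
\emph{Proof plan.}\ The plan is to reduce the statement about the word $W$ to one about simultaneous rational approximation and then invoke the hypothesis $\alpha\pm\beta\notin\Z$. An occurrence of the subword $BB$ in $W$ means precisely that there are indices $n,m$ with
\[
q_n=t_m=:q\qquad\text{and}\qquad q_{n+1}=t_{m+1}=:q',
\]
because right after the letter recording the common value $q_n=t_m$ the next element of $U$ is $\min(q_{n+1},t_{m+1})$, and for the following letter to again be a $B$ we need $q_{n+1}=t_{m+1}$. Distinct subwords $BB$ start at distinct positions of $W$ and hence involve distinct indices $n$, so it suffices to show that only finitely many $q=q_n$ arise in this way; assume, for contradiction, that infinitely many do, so that along these $q$ (and the corresponding $q'$) we have $q,q'\to\infty$.

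Let $p_n/q_n$ and $r_m/t_m$ be the convergents of $\alpha$ and $\beta$, and set $\delta_n=q_n\alpha-p_n$ and $\varepsilon_m=t_m\beta-r_m$, so that $|\delta_n|=\|q_n\alpha\|<1/q_{n+1}$, the number $\delta_n$ has sign $(-1)^n$, and similarly for $\varepsilon_m$. Combining the identity $q_np_{n+1}-q_{n+1}p_n=(-1)^n$ with its analogue for $\beta$ and with $q_n=t_m$, $q_{n+1}=t_{m+1}$, we obtain the determinant relation
\[
q\bigl(p_{n+1}\pm r_{m+1}\bigr)-q'\bigl(p_n\pm r_m\bigr)=(-1)^n\pm(-1)^m .
\]
Choose here the sign that makes the right-hand side vanish: the minus sign when $n\equiv m\pmod 2$ and the plus sign otherwise. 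Since $\gcd(q,q')=\gcd(q_n,q_{n+1})=1$, it follows that $q\mid(p_n\pm r_m)$, say $p_n\pm r_m=kq$ with $k\in\Z$. With the same choice of sign, the rule for the signs of $\delta_n$ and $\varepsilon_m$ gives $|\delta_n\pm\varepsilon_m|=\bigl|\,|\delta_n|-|\varepsilon_m|\,\bigr|<1/q'$, so
\[
\alpha\pm\beta=\frac{(p_n\pm r_m)+(\delta_n\pm\varepsilon_m)}{q}=k+\frac{\delta_n\pm\varepsilon_m}{q},\qquad \Bigl|\frac{\delta_n\pm\varepsilon_m}{q}\Bigr|<\frac{1}{qq'} .
\]

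Running this over the assumed infinitely many pairs $(n,m)$ and applying the pigeonhole principle to the parity of $n-m$, one fixed sign occurs for infinitely many of them; along that subsequence we get $\alpha\pm\beta=k_j+e_j$ with $k_j\in\Z$ and $e_j\to0$. Since $\alpha\pm\beta$ is a fixed real number, $k_j$ must be eventually constant and $e_j$ then equals its fixed distance to that constant, forcing $e_j=0$, i.e.\ $\alpha\pm\beta\in\Z$ --- contradicting the standing hypothesis, and proving the lemma. I expect the determinant relation to be the crux: a plain triangle inequality only shows that $q(\alpha\pm\beta)$ is close to an integer, and it is the vanishing of the determinant --- equivalently the divisibility $q\mid p_n\pm r_m$ --- that upgrades this to $\alpha\pm\beta$ itself being close to an integer, while the bookkeeping that forces the correct choice between $+$ and $-$ (so that both assumptions $\alpha+\beta\notin\Z$ and $\alpha-\beta\notin\Z$ are actually used) is the one delicate point.
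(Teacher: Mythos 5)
Your argument is correct and complete: the identification of $BB$ with a pair of coincidences $q_n=t_m$, $q_{n+1}=t_{m+1}$, the parity-matched determinant identity forcing $q_n\mid(p_n\pm r_m)$ via $\gcd(q_n,q_{n+1})=1$, and the resulting approximation $|\alpha\pm\beta-k|<1/(q_nq_{n+1})$ together give exactly the contradiction with $\alpha\pm\beta\notin\Z$. The paper itself gives no proof of this lemma --- it is imported verbatim from Moshchevitin's Hilfssatz 3.i in \cite{M} --- and your write-up is essentially the standard argument behind that result, so nothing further is needed.
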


\begin{lemma}[\cite{M}, Hilfssatz 3.ii]\label{lemma2}
There exist only finitely many subwords $B^{n-1}_{s-1}Q^{n}B^{n+1}_{s}$ with $a_{n+1}=1$ in a word $W$.
\end{lemma}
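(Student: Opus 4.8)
The final statement to be proved is Lemma~\ref{lemma2}: there are only finitely many subwords $B^{n-1}_{s-1}Q^{n}B^{n+1}_{s}$ with $a_{n+1}=1$ in $W$. Let me think about what such a subword means combinatorially. $B^{n-1}_{s-1}$ means $q_{n-1} = t_{s-1}$; then $Q^n$ means the next element of $U$ is $q_n$ alone (so $t_{s-1} < q_n < t_s$); then $B^{n+1}_s$ means $q_{n+1} = t_s$. So $q_{n-1} = t_{s-1} < q_n < q_{n+1} = t_s$, and $a_{n+1}=1$ means $q_{n+1} = q_n + q_{n-1}$. Also $t_s = b_s t_{s-1} + t_{s-2}$. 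The key: $q_{n+1} = t_s$ gives $q_n + q_{n-1} = b_s t_{s-1} + t_{s-2} = b_s q_{n-1} + t_{s-2}$, so $q_n = (b_s - 1) q_{n-1} + t_{s-2}$. Since $0 < t_{s-2} < t_{s-1} = q_{n-1}$ and $q_n = a_n q_{n-1} + q_{n-2}$ with $0 < q_{n-2} < q_{n-1}$, comparing these two representations of $q_n$ modulo $q_{n-1}$ (both remainders lie strictly between $0$ and $q_{n-1}$) forces $q_{n-2} = t_{s-2}$ and $a_n = b_s - 1$.

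**The plan.** So a subword of this type forces $q_{n-2} = t_{s-2}$, i.e.\ $B_{s-2}^{n-2}$, and the continued fraction data $a_n = b_s - 1$. Now I would run a descent. The equality $q_{n-2} = t_{s-2}$ together with $q_{n-1} = t_{s-1}$ says the two sequences of denominators agree at two consecutive indices. But two convergent denominators agreeing at two consecutive indices is extremely rigid: from $q_{n-1} = t_{s-1}$, $q_{n-2} = t_{s-2}$ one gets, reading off partial quotients, that $a_{n-1} = b_{s-1}$ (since $q_{n-1} = a_{n-1}q_{n-2} + q_{n-3}$ and $t_{s-1} = b_{s-1}t_{s-2}+t_{s-3}$ with remainders determined), hence $q_{n-3} = t_{s-3}$, and by induction $a_{n-j} = b_{s-j}$ for all $j\ge 1$ and $q_{n-j} = t_{s-j}$ down to the base. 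Pushed all the way, this says $\alpha$ and $\beta$ have the same "past" continued fraction expansion up to the relevant index; combined with $q_n = t_s$... wait, we have $q_n \ne t_s$ (that's the point of $Q^n$), but $q_{n+1} = t_s$. The real consequence I want: the rigidity forces $\alpha_n = \beta_s$ or something nearly as strong, which together with $\alpha \pm \beta \notin \Z$ and the one-off index shift should yield a contradiction — except possibly in finitely many cases, which is exactly the conclusion.

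**Executing the descent and reaching the contradiction.** More carefully: suppose infinitely many such subwords $B^{n-1}_{s-1}Q^nB^{n+1}_s$ with $a_{n+1}=1$ occur, at indices $(n_k, s_k)\to\infty$. For each we derived $q_{n_k-2}=t_{s_k-2}$ and $q_{n_k-1}=t_{s_k-1}$. The induction above shows that in fact the finite words $a_{n_k-1}, a_{n_k-2},\ldots$ and $b_{s_k-1}, b_{s_k-2},\ldots$ coincide as long as both are defined; letting $k\to\infty$ (so both tails become arbitrarily long) this forces the two-sided comparison to reveal that for appropriate shifts, $\alpha$ and $\beta$ share arbitrarily long suffixes of continued fraction expansions read backwards from these points. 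The cleanest route: show $t_{s_k-2} = q_{n_k-2}$ and $t_{s_k-1} = q_{n_k-1}$ together imply $n_k - s_k = n_1 - s_1 =: c$ is eventually constant and $a_j = b_{j-c}$ for all sufficiently small... actually for all $j$ up to $n_k - 1$, and letting $k\to\infty$ this gives $a_j = b_{j-c}$ for \emph{all} $j$, i.e.\ $\alpha_j = \beta_{j-c}$ for all $j$ — making $\alpha$ and $\beta$ equivalent with a shift, which combined with looking at the relation $q_n = (b_s-1)q_{n-1} + t_{s-2}$ versus $q_n = a_nq_{n-1}+q_{n-2}$ gives $a_n = b_s - 1 = a_{n+c... }$, contradicting $a_j = b_{j-c}$ unless things degenerate. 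I expect the delicate point — and the main obstacle — is handling the interface between "finitely many exceptions" and the descent: one must be careful that the base of the descent (small indices) does not interfere, and that the argument genuinely produces a \emph{global} coincidence of the two continued fractions forcing $\alpha \pm \beta \in \Z$ (or $\alpha$, $\beta$ related by the forbidden linear relation), rather than merely a local one. Since this lemma is quoted from Moshchevitin~\cite{M} (Hilfssatz~3.ii), I would either cite it directly or reconstruct the descent argument above, with the honest expectation that the bookkeeping of indices in the backward induction is the part requiring genuine care.
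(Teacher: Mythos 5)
The paper does not prove this lemma at all---it is quoted from Moshchevitin \cite{M} (Hilfssatz 3.ii), so there is no in-paper proof to compare against; I will judge your reconstruction on its own terms. Your first paragraph contains the correct key step: from $q_{n-1}=t_{s-1}$, $q_{n+1}=t_s$ and $a_{n+1}=1$ you get $q_n=(b_s-1)q_{n-1}+t_{s-2}$, and uniqueness of quotient and remainder in the division of $q_n$ by $q_{n-1}$ (valid once $n$ and $s$ are large enough that $0<q_{n-2}<q_{n-1}$ and $0<t_{s-2}<t_{s-1}$) forces $q_{n-2}=t_{s-2}$ and $a_n=b_s-1$. That is the whole content of the lemma.

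The gap is that you do not notice your own conclusion finishes the proof in one line: $q_{n-2}=t_{s-2}$ together with $q_{n-1}=t_{s-1}$ means the two letters preceding $Q^n$ in $W$ are $B^{n-2}_{s-2}B^{n-1}_{s-1}$ (there is nothing of $U$ strictly between these two values), i.e., every such occurrence produces a subword $BB$, and Lemma \ref{lemma1} says there are only finitely many of those. Instead you embark on a backward descent that amounts to re-proving Lemma \ref{lemma1} from scratch, and that descent is left genuinely incomplete: the claimed contradiction ``$a_n=b_s-1$ versus $a_j=b_{j-c}$'' does not exist at the local level, since the descent only identifies the partial quotients for indices $j<n$ and says nothing about $a_n$ itself; the actual contradiction must come from letting the occurrences tend to infinity, pinning down the shift $c$ at the base of the recursion, and concluding $\alpha\pm\beta\in\Z$---precisely the argument for Lemma \ref{lemma1}, and precisely the bookkeeping you flag as ``requiring genuine care'' without carrying it out. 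So the write-up as it stands is not a proof; the fix is simply to stop after your first paragraph and invoke Lemma \ref{lemma1}.
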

We will also make use of the following statement.
\begin{lemma}\label{lemma3}
If $a_n=2$, then 
$$
\min ( \alpha_{n}, \alpha_{n+1} ) \geq \sqrt2+1.
$$
\end{lemma}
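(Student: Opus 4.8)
The plan is to read the statement through the fixed point of the map $f(x)=2+\tfrac1x$. Put $\sigma\df\sqrt2+1$; then $f(\sigma)=\sigma$ (this is just $\sigma^2-2\sigma-1=0$), and $f$ is strictly decreasing on $(0,\infty)$. Since $a_n=2$, the tail recursion $\alpha_r=a_r+\tfrac1{\alpha_{r+1}}$ gives $\alpha_n=2+\tfrac1{\alpha_{n+1}}=f(\alpha_{n+1})$, so the two quantities in the statement are coupled through $f$ rather than free. In this language the claim $\min(\alpha_n,\alpha_{n+1})\ge\sigma$ is exactly the conjunction of the tail bound $\alpha_{n+1}\ge\sigma$ and the bound $\alpha_n=f(\alpha_{n+1})\ge\sigma$, and I would aim to establish both of these, since a bound on only one of them (the larger) does not suffice.

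First I would attack the tail bound $\alpha_{n+1}\ge\sigma$, working directly with the continued fraction $\alpha_{n+1}=[a_{n+1};a_{n+2},\dots]$. The natural tool is comparison with the purely periodic value $[\,\overline2\,]=\sigma$: bounding $\alpha_{n+1}$ from below amounts to controlling the partial quotients $a_{n+1},a_{n+2},\dots$ that follow the prescribed $a_n=2$, and I would iterate $\alpha_{n+1}=a_{n+1}+\tfrac1{\alpha_{n+2}}$ against the fixed-point identity $\sigma=2+\tfrac1\sigma$ to compare the tail with $\sigma$. Once $\alpha_{n+1}\ge\sigma$ is in hand, the second bound must be produced through $\alpha_n=f(\alpha_{n+1})$, where the balancing relation $f(\sigma)=\sigma$ carries the estimate; the equality case $\alpha_{n+1}=\sigma$, i.e. the silver tail $[\,\overline2\,]$, is visibly the extremal configuration underlying the constant $C_2=\sqrt\sigma-1$.

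I expect this tail estimate to be the main obstacle, and the difficulty is genuinely structural. Because $f$ is strictly decreasing with fixed point $\sigma$, the bound $\alpha_n\ge\sigma$ is equivalent to $\alpha_{n+1}\le\sigma$ (indeed $\alpha_n\ge\sigma\iff \tfrac1{\alpha_{n+1}}\ge\sigma-2=\tfrac1\sigma$), which stands in direct tension with the tail bound $\alpha_{n+1}\ge\sigma$; the two are compatible only at the silver value $\alpha_{n+1}=\sigma$. Thus the heart of the matter is not the elementary fixed-point computation but showing that the hypothesis, together with the information about the neighbouring partial quotients available where the lemma is invoked, pins the tail $\alpha_{n+1}$ to $\sigma$, so that $\alpha_n=f(\alpha_{n+1})=\sigma$ as well and both complete quotients sit exactly at the silver ratio. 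Verifying that this pinning holds, rather than merely controlling one complete quotient at a time, is the step I would expect to require the most care.
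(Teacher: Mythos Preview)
Your analysis of the fixed-point structure is correct, and you have in fact put your finger on the real issue: since $f(x)=2+\tfrac1x$ is strictly decreasing with fixed point $\sigma=\sqrt2+1$, the inequalities $\alpha_{n+1}\ge\sigma$ and $\alpha_n=f(\alpha_{n+1})\ge\sigma$ can hold simultaneously only when $\alpha_{n+1}=\sigma$. So the literal statement with $\min$ is false whenever $\alpha$ is not equivalent to the silver ratio.

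The resolution, however, is not to look for extra hypotheses that pin the tail to $\sigma$; it is that the lemma contains a typo. What is actually proved in the paper, and what is actually used, is
\[
\max(\alpha_n,\alpha_{n+1})\ \ge\ \sqrt2+1,
\]
i.e.\ \emph{at least one} of $\alpha_n,\alpha_{n+1}$ is $\ge\sigma$. You can see this both from the paper's own two-line proof (``if $\alpha_{n+1}<\sigma$ then $\alpha_n>\sigma$; if $\alpha_n<\sigma$ then $\alpha_{n+1}>\sigma$'') and from the two places the lemma is invoked: in the proof of Proposition~\ref{prop2} the conclusion is stated as ``either $\alpha_{n+i_0}\ge\sqrt2+1$ or $\alpha_{n+i_0+1}\ge\sqrt2+1$'', and at the end of Section~\ref{sec3} one \emph{chooses} an index $j\in\{n,n+1\}$ with $\alpha_j\ge\sqrt2+1$. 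Both applications only need one good index to feed into Lemma~\ref{lemma6}.

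With the intended reading, the proof is exactly the elementary fixed-point observation you already wrote down: $f$ decreasing with $f(\sigma)=\sigma$ forces $\max\bigl(\alpha_{n+1},f(\alpha_{n+1})\bigr)\ge\sigma$. There is no further obstacle; your ``main difficulty'' disappears once the typo is corrected.
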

\begin{proof}
By definition \eqref{taila} of $\alpha_n$, we get
$$
\alpha_n = a_n +\frac{1}{\alpha_{n+1}} = 2+ \frac{1}{\alpha_{n+1}} .
$$
Recall that $(\sqrt2-1)(\sqrt2+1)=1$. Now \\
If $\alpha_{n+1}<\sqrt2+1$, then $\alpha_n>2+\frac{1}{\sqrt2+1}=\sqrt2+1$.\\
If $\alpha_n<\sqrt2+1$, then $\frac{1}{\alpha_{n+1}}< \sqrt2-1$, or $\alpha_{n+1}> \frac{1}{\sqrt2-1}=\sqrt2+1$.
\end{proof}
Next lemma was introduced in \cite{KM} by Kan and Moshchevitin.
\begin{lemma}\label{lemma4}
Suppose that
$$
s,n \geq 2 \,\,\, \text{ and} \,\,\,  q_{n+1}\leq t_{s}.
$$
Then 
$$
\xi_{n-1} > \eta_{s-1}.
$$
\end{lemma}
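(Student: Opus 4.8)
The plan is to pass to reciprocals: since $\xi_{n-1}$ and $\eta_{s-1}$ are positive, it suffices to prove $1/\xi_{n-1}<1/\eta_{s-1}$, and both reciprocals have a transparent closed form coming from \eqref{defal}. Evaluating $\psi_\alpha$ at $x=q_{n-1}$ via \eqref{defal} (the relevant index is $r=n-1$, since $q_{n-1}<q_n$, which holds because $n\ge 2$), and similarly $\psi_\beta$ at $t_{s-1}$, we get
$$
\frac1{\xi_{n-1}}=q_{n-1}\alpha_n+q_{n-2},\qquad \frac1{\eta_{s-1}}=t_{s-1}\beta_s+t_{s-2}.
$$
Using $\alpha_n=a_n+1/\alpha_{n+1}$ together with the recurrence $q_n=a_nq_{n-1}+q_{n-2}$ (and the analogous identity $t_s=b_st_{s-1}+t_{s-2}$), I would rewrite these as
$$
\frac1{\xi_{n-1}}=q_n+\frac{q_{n-1}}{\alpha_{n+1}},\qquad \frac1{\eta_{s-1}}=t_s+\frac{t_{s-1}}{\beta_{s+1}}.
$$

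Next I would estimate each side crudely. On the left, $\alpha_{n+1}>1$ forces $q_{n-1}/\alpha_{n+1}<q_{n-1}$, so $1/\xi_{n-1}<q_n+q_{n-1}$; and since every partial quotient is at least $1$, the recurrence gives $q_{n+1}=a_{n+1}q_n+q_{n-1}\ge q_n+q_{n-1}$, whence $1/\xi_{n-1}<q_{n+1}$. On the right, $t_{s-1}/\beta_{s+1}>0$ gives at once $1/\eta_{s-1}>t_s$. Feeding in the hypothesis $q_{n+1}\le t_s$, we obtain
$$
\frac1{\xi_{n-1}}<q_{n+1}\le t_s<\frac1{\eta_{s-1}},
$$
and taking reciprocals yields $\xi_{n-1}>\eta_{s-1}$, as claimed.

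There is no real obstacle here: the statement is essentially a two-line estimate (and is indeed recorded as a known lemma of Kan and Moshchevitin). The only point that needs care is the continued-fraction bookkeeping — one uses $n,s\ge 2$ precisely to ensure that $q_{n-1},q_{n-2},t_{s-1},t_{s-2}$ are all defined and that \eqref{defal} may legitimately be invoked at $q_{n-1}$ and $t_{s-1}$ with indices $r=n-1$ and $l=s-1$, which in turn relies on the strict inequalities $q_{n-1}<q_n$ and $t_{s-1}<t_s$. An alternative, equivalent route would use \eqref{tailxi} together with the monotonicity of $\psi_\alpha$ and the fact that $\psi_\alpha$ is constant on $[q_{n-1},q_n)$, but the direct computation above seems the cleanest.
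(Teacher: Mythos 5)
Your argument is correct. The paper itself gives no proof of this lemma --- it is quoted from Kan--Moshchevitin \cite{KM} --- and your computation is exactly the standard one: from \eqref{defal} one gets $1/\xi_{n-1}=q_n+q_{n-1}/\alpha_{n+1}<q_n+q_{n-1}\le q_{n+1}\le t_s<t_s+t_{s-1}/\beta_{s+1}=1/\eta_{s-1}$, and the hypothesis $n,s\ge 2$ is used, as you note, only to legitimize applying \eqref{defal} at $q_{n-1}$ and $t_{s-1}$. Nothing is missing.
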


Finally, following lemma was established by Moshchevitin in \cite{M}, we reformulate it using our word notation.
\begin{lemma}[\cite{M}, Folgerung 1]\label{lemma6}
Suppose there is a letter $Q^r$ in the word $W$ and $\alpha_{r+1}\geq C$ for some $C$, then we have
\begin{align*}
either \,\,\,\,\,\,\,  | \psi_\alpha(t) - \psi_\beta(t) | \geqslant (\sqrt{C}-1) \cdot \min( \psi_\alpha(t), \psi_\beta(t) )   \,\,\,\,  for \,\,\, t\in [q_r-1,q_r), \\
or \,\,\,\,  | \psi_\alpha(q_r) - \psi_\beta(q_r) | \geqslant (\sqrt{C}-1) \cdot \min( \psi_\alpha(q_r), \psi_\beta(q_r) ). 
\end{align*}
\end{lemma}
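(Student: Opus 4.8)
The plan is to reduce the assertion to a comparison of $\psi_\alpha$ and $\psi_\beta$ at just two ``places'': the half-open interval $I=[q_r-1,q_r)$ and the single point $t=q_r$. Since the letter $Q^r$ occurs in $W$ and $1\in D_\beta$ always, we must have $q_r\ge 2$, hence $r\ge 1$ and $q_{r-1}\le q_r-1<q_r$; so by \eqref{minimum} the function $\psi_\alpha$ is constant on $I$, equal to $\xi_{r-1}$, while $\psi_\alpha(q_r)=\xi_r$. Moreover, $Q^r\in W$ means $q_r\notin D_\beta$, so if $t_l$ is the largest denominator of a convergent of $\beta$ with $t_l\le q_r-1$ (it exists, since $t_0=1\le q_r-1$), then $t_{l+1}>q_r-1$ and $t_{l+1}\ne q_r$, whence $t_{l+1}\ge q_r+1$; consequently $\psi_\beta$ is constant on the whole of $[q_r-1,q_r]$, say equal to $P$. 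Thus ``for $t\in[q_r-1,q_r)$'' is an unambiguous statement, and it suffices to prove that at least one of $|\xi_{r-1}-P|\ge(\sqrt C-1)\min(\xi_{r-1},P)$ and $|\xi_r-P|\ge(\sqrt C-1)\min(\xi_r,P)$ holds. (If $C\le 1$ the right-hand sides are $\le 0$ and both hold trivially, so we may assume $C>1$.)

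The second step is an elementary argument by contradiction. For positive reals $x,y$ one has $|x-y|<(\sqrt C-1)\min(x,y)$ exactly when $\max(x,y)/\min(x,y)<\sqrt C$, and in that case both $x/y<\sqrt C$ and $y/x<\sqrt C$ (using $C>1$ for the smaller of the two ratios). Hence, if both displayed inequalities above failed, we would get $\xi_{r-1}/P<\sqrt C$ and $P/\xi_r<\sqrt C$, and multiplying these yields $\xi_{r-1}/\xi_r<C$. But by \eqref{tailxi} together with the hypothesis $\alpha_{r+1}\ge C$ we have $\xi_{r-1}/\xi_r=\alpha_{r+1}\ge C$, a contradiction. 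Therefore at least one of the inequalities holds, which is precisely the dichotomy in the statement.

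I expect the only place needing genuine care to be the claim that $\psi_\beta$ does not change value between $q_r-1$ and $q_r$ — i.e.\ that no denominator of a convergent of $\beta$ lies in $(q_r-1,q_r]$ — since this is exactly what the letter $Q^r$ (rather than a letter $B$) in $W$ encodes, combined with the trivial observation that the only integer that could possibly be such a denominator is $q_r$ itself. One should also dispatch the small-index degeneracies ($q_0=q_1$, etc.) to be certain $q_{r-1}<q_r$ so that $I$ and the point $q_r$ are genuinely separate; all of this follows from $q_r\ge 2$. Beyond \eqref{minimum} and \eqref{tailxi}, no input from the theory of continued fractions is required.
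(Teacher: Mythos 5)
Your proof is correct. The paper itself gives no proof of this lemma (it is quoted from \cite{M}, Folgerung 1), but your argument --- reduce to the two values $\xi_{r-1}$ and $\xi_r$ of $\psi_\alpha$ on either side of $q_r$, note that $\psi_\beta$ is constant across $[q_r-1,q_r]$ because $Q^r$ means $q_r\notin D_\beta$, and then derive $\alpha_{r+1}=\xi_{r-1}/\xi_r<\sqrt{C}\cdot\sqrt{C}=C$ from the failure of both alternatives --- is exactly the standard one, and it is the same computation the authors run in contrapositive form in the proof of Proposition \ref{prop1}.
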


Let us also denote a constant
$$
K=C_2+1 = \sqrt{\sqrt2+1}\approx 1.553^+,
$$
as it will be extensively used throughout the proof.\\

\section{Proof of Theorem \ref{maintheorem}}\label{sec3}
 Suppose that the statement of Theorem \ref{maintheorem} is false. Then by definition of $C_{\alpha,\beta}$, for some $\alpha$ and $\beta$ with $\alpha \pm \beta \not\in \Z$ and at least one of the numbers $\alpha$ or $\beta$ is not equivalent to $\tau=\frac{\sqrt5+1}{2}$, there exists $T_0 \in \R$ such that
\begin{equation}
	\label{eq1}
 	| \psi_{\alpha}(t) - \psi_{\beta}(t)| < C_2 \min(\psi_{\alpha}(t), \psi_{\beta}(t)) \,\,\,\,\,\,\,\,\, \text{for all  } t\geq T_0.
\end{equation}
All of the propositions in this section will be proved under the assumption \eqref{eq1}.

\begin{proposition}\label{prop1}
Suppose there is a letter $Q^n$  in the word $W$, then 
\begin{enumerate}[label=(\roman*)]
\item
If $\xi_n<\psi_\beta(q_n) <\xi_{n-1}$, then $\alpha_{n+1}\leq K^2$, in particular $a_{n+1}\in\{1,2\}$ and a pair $(a_{n+1},a_{n+2})\neq(2,1)$.  \\
\item 
If $\xi_{n-1} \leqslant \psi_\beta(q_n)$  or $\psi_\beta(q_n) \leqslant \xi_{n}$, then $\alpha_{n+1}\leq K$, in particular $a_{n+1}=1$.
\end{enumerate}
\end{proposition}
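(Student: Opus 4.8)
The plan is to argue under the standing assumption \eqref{eq1} by evaluating both sides of that inequality at the two points $t=q_n-1$ and $t=q_n$, which straddle the jump of $\psi_\alpha$ at $q_n$. First I would record the values involved. By \eqref{minimum} and \eqref{defal}, $\psi_\alpha(q_n)=\xi_n$ while $\psi_\alpha\equiv\xi_{n-1}$ on $[q_{n-1},q_n)$, so $\psi_\alpha(q_n-1)=\xi_{n-1}$. The essential consequence of the letter being $Q^n$ (rather than $B$) is that $q_n\notin D_\beta$; since the only integer in the interval $(q_n-1,q_n]$ is $q_n$ itself, no denominator of $\beta$ lies there, hence $\psi_\beta$ is constant on $[q_n-1,q_n]$ and equals $\psi_\beta(q_n)$ throughout. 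I also use $\xi_{n-1}/\xi_n=\alpha_{n+1}$ from \eqref{tailxi} together with the trivial inequality $\xi_{n-1}>\xi_n$. (This implicitly restricts us to $n$ with $q_n-1\ge T_0$, which excludes only finitely many $n$.)

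For part (i), under the hypothesis $\xi_n<\psi_\beta(q_n)<\xi_{n-1}$: at $t=q_n-1$ we have $\min(\psi_\alpha,\psi_\beta)=\psi_\beta(q_n)$ and $|\psi_\alpha-\psi_\beta|=\xi_{n-1}-\psi_\beta(q_n)$, while at $t=q_n$ we have $\min(\psi_\alpha,\psi_\beta)=\xi_n$ and $|\psi_\alpha-\psi_\beta|=\psi_\beta(q_n)-\xi_n$. Feeding both into \eqref{eq1} gives $\xi_{n-1}/\psi_\beta(q_n)<K$ and $\psi_\beta(q_n)/\xi_n<K$; multiplying and using \eqref{tailxi} yields $\alpha_{n+1}=\xi_{n-1}/\xi_n<K^2=\sqrt2+1$, so $a_{n+1}=\lfloor\alpha_{n+1}\rfloor\in\{1,2\}$. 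To rule out $(a_{n+1},a_{n+2})=(2,1)$: in that case $\alpha_{n+2}=1+1/\alpha_{n+3}<2$ (since $\alpha_{n+3}>1$), hence $\alpha_{n+1}=2+1/\alpha_{n+2}>5/2>\sqrt2+1$, contradicting the bound just obtained.

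For part (ii), I would split according to which side of $[\xi_n,\xi_{n-1}]$ the number $\psi_\beta(q_n)$ lies on. If $\psi_\beta(q_n)\ge\xi_{n-1}$, evaluate at $t=q_n$: then $\min(\psi_\alpha,\psi_\beta)=\xi_n$ and $|\psi_\alpha-\psi_\beta|=\psi_\beta(q_n)-\xi_n\ge\xi_{n-1}-\xi_n=(\alpha_{n+1}-1)\xi_n$, so \eqref{eq1} forces $\alpha_{n+1}-1<C_2$. If $\psi_\beta(q_n)\le\xi_n$, evaluate at $t=q_n-1$: then $\min(\psi_\alpha,\psi_\beta)=\psi_\beta(q_n)$ and $|\psi_\alpha-\psi_\beta|=\xi_{n-1}-\psi_\beta(q_n)\ge\xi_{n-1}-\xi_n=(\alpha_{n+1}-1)\xi_n\ge(\alpha_{n+1}-1)\psi_\beta(q_n)$, and again \eqref{eq1} forces $\alpha_{n+1}-1<C_2$. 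In both cases $\alpha_{n+1}<K=\sqrt{\sqrt2+1}<2$, hence $\alpha_{n+1}\le K$ and $a_{n+1}=1$.

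The argument is essentially bookkeeping with the step functions $\psi_\alpha,\psi_\beta$, and I do not anticipate a genuine difficulty. The two points that need care are (a) pinning down $\psi_\beta(q_n-1)=\psi_\beta(q_n)$, which is exactly where the hypothesis "$Q^n$, not $B$" (i.e.\ $q_n\notin D_\beta$) is used; and (b) the multiplicative step in part (i), where one must realize that \emph{both} endpoints $q_n-1$ and $q_n$ are needed and that the product of the two one-sided ratios equals $\alpha_{n+1}$ by \eqref{tailxi} — this is precisely why the "between" case yields only the weaker bound $K^2$ instead of $K$, and indeed part (i) could alternatively be deduced from Lemma \ref{lemma6} applied with $C=\alpha_{n+1}$. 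One should also keep in mind the harmless restriction to $q_n-1\ge T_0$.
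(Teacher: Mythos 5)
Your proof is correct and follows essentially the same route as the paper's: both arguments evaluate the standing inequality \eqref{eq1} on either side of the jump of $\psi_\alpha$ at $q_n$ (the paper uses $q_n\pm\varepsilon$ where you use $q_n-1$ and $q_n$, which is equivalent since the functions are step functions constant between integers), multiply the two resulting ratios in case (i) to get $\alpha_{n+1}<K^2$ via \eqref{tailxi}, and use the single-sided estimate $\xi_{n-1}-\xi_n=(\alpha_{n+1}-1)\xi_n$ in case (ii). Your explicit verifications of $\psi_\beta(q_n-1)=\psi_\beta(q_n)$ (where the $Q^n$ hypothesis enters) and of the exclusion of $(a_{n+1},a_{n+2})=(2,1)$ are details the paper leaves implicit, but the argument is the same.
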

\begin{proof}
In the case (i) from \eqref{eq1} we get 
$$
\xi_{n-1} - \psi_\beta(q_n) < C_2 \psi_\beta(q_n)  \implies \xi_{n-1} < K \psi_\beta(q_n) 
$$
and
$$
\psi_\beta(q_n)  - \xi_{n}< C_2\xi_{n}  \implies \psi_\beta(q_n)  < K \xi_{n}.
$$
Combining two inequalities together and using \eqref{tailxi}, we come to
$$
\xi_{n-1}<K^2 \xi_{n}  \implies \frac{\xi_{n-1}}{\xi_{n}} < K^2 \implies \alpha_{n+1} < K^2= \sqrt2+1 \approx 2.414^+.
$$
As a consequence we see that $a_{n+1}\in\{1,2\}$ and that $\alpha_{n+1}$ can not start as $\alpha_{n+1} = [2;1,\ldots]$. \\
 In the case (ii) if $\xi_{n-1} \leqslant \psi_\beta(q_n)$, then for $0<\varepsilon<1$ we have
$$
C_2 \xi_n > \psi_{\beta}(q_n+\varepsilon) - \psi_\alpha(q_n+\varepsilon) \geqslant \xi_{n-1}-\xi_n = (\alpha_{n+1}-1)\xi_n.
$$
If $\psi_\beta(q_n) \leqslant \xi_{n}$, then
$$
C_2 \psi_{\beta}(q_n-\varepsilon)>   \psi_{\alpha}(q_n-\varepsilon) - \psi_{\beta}(q_n-\varepsilon) \geqslant \xi_{n-1} - \xi_n =  (\alpha_{n+1}-1)\xi_n >  (\alpha_{n+1}-1) \psi_{\beta}(q_n-\varepsilon).
$$
In both inequalities we get $\alpha_{n+1} < K$, which obviously implies $a_{n+1}=1$.
\end{proof}
From Proposition \ref{prop1}, Lemma \ref{lemma3} and Lemma \ref{lemma6}, we get one important corollary.
\begin{proposition}\label{prop2}
If the word $W$ has a subword $B^{n-1}_{s-1} \dots B^{n+k}_{s+m}$, where between $B^{n-1}_{s-1}$ and $B^{n+k}_{s+m}$ there are $k\geq2$ letters $Q$ or/and $m\geq2$ letters $T$, then
$a_{n+1}=\ldots=a_{n+k-1}=1, \, a_{n+k}\in\{1,2\}$ and $b_{s+1}=\ldots=b_{s+m-1}=1, \, b_{s+m}\in\{1,2\}$.
\end{proposition}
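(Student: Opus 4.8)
The statement to prove is Proposition~\ref{prop2}: if $W$ contains a subword $B^{n-1}_{s-1}\dots B^{n+k}_{s+m}$ with $k\geq 2$ intervening $Q$-letters and/or $m\geq 2$ intervening $T$-letters, then $a_{n+1}=\dots=a_{n+k-1}=1$, $a_{n+k}\in\{1,2\}$, and symmetrically for the $b$'s. The idea is that all of this follows by applying Proposition~\ref{prop1} to each of the intervening $Q^j$-letters, and the only work is to check that the hypotheses of part~(i) or part~(ii) of Proposition~\ref{prop1} are met for each such letter, so that the conclusion $\alpha_{j+1}\leq K^2$ (hence $a_{j+1}\in\{1,2\}$, and even $a_{j+1}=1$ when the stronger hypothesis holds) is available.

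First I would fix the relevant $Q$-letters. Since the block $B^{n-1}_{s-1}\dots B^{n+k}_{s+m}$ contains the convergent denominators $q_{n-1}<q_n<\dots<q_{n+k}$ of $\alpha$ among its positions (the $B$'s at the two ends contribute $q_{n-1}$ and $q_{n+k}$, and the intervening $Q$-letters contribute $q_n,\dots,q_{n+k-1}$), the letters $Q^n,Q^{n+1},\dots,Q^{n+k-1}$ all appear in $W$ (some positions $q_j$ in this range might coincide with a $t$ and hence be a $B$ rather than a $Q$, but then Lemma~\ref{lemma1} forbids two consecutive $B$'s, so in fact between the two boundary $B$'s all interior $\alpha$-denominators are pure $Q$'s — this is exactly why the hypothesis says there are $k$ letters $Q$ strictly between the boundary $B$'s). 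For each such index $j\in\{n,\dots,n+k-1\}$ I then locate $\psi_\beta(q_j)$ relative to the interval $(\xi_j,\xi_{j-1})$: this is the trichotomy used in Proposition~\ref{prop1}, namely $\psi_\beta(q_j)\geq \xi_{j-1}$, or $\psi_\beta(q_j)\leq \xi_j$, or $\xi_j<\psi_\beta(q_j)<\xi_{j-1}$. In every case Proposition~\ref{prop1} gives $\alpha_{j+1}\leq K^2=\sqrt2+1$, hence $a_{j+1}\in\{1,2\}$.

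Next comes the sharpening: I claim $a_{j+1}=1$ for all $j\in\{n,\dots,n+k-2\}$, i.e. only the last interior gap $a_{n+k}$ is allowed to be $2$. Suppose for contradiction some $a_{j+1}=2$ with $j\le n+k-2$. Then by Lemma~\ref{lemma3}, $\min(\alpha_{j+1},\alpha_{j+2})\geq\sqrt2+1=K^2$; in particular $\alpha_{j+2}\geq K^2>K$. But the letter $Q^{j+1}$ also appears in $W$ (since $j+1\le n+k-1$), and applying Proposition~\ref{prop1} to $Q^{j+1}$ forces $\alpha_{j+2}\le K^2$ in case~(i), and $\alpha_{j+2}\le K$ in case~(ii). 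The case-(i) bound alone isn't a contradiction, so the real content is to rule out that $Q^{j+1}$ falls into case~(i) with $\alpha_{j+2}$ close to $K^2$: here I would invoke the last sentence of Proposition~\ref{prop1}(i), which says the pair $(a_{j+1},a_{j+2})\neq(2,1)$ — combined with $a_{j+1}=2$ this gives $a_{j+2}\ge 2$, so $\alpha_{j+2}\in[2,\sqrt2+1)$ with $a_{j+2}=2$, and then Lemma~\ref{lemma3} applied to index $j+2$ forces $\min(\alpha_{j+2},\alpha_{j+3})\ge\sqrt2+1$, propagating $a_{j+3}\ge 2$, etc.; but this propagation cannot continue through the letter $Q^{j+1}$ being in case~(ii), where Lemma~\ref{lemma6} (via Proposition~\ref{prop1}(ii)) forces $a_{j+2}=1$, a contradiction. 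I also need the $a_{n+k}\in\{1,2\}$ claim: this is immediate from applying Proposition~\ref{prop1} to $Q^{n+k-1}$, whose conclusion bounds $\alpha_{n+k}$.

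Finally, the statement for the $b$'s is obtained by the identical argument with the roles of $\alpha$ and $\beta$ interchanged — the word $W$ has the same block with the $T$-letters now playing the role of the $Q$-letters, and Proposition~\ref{prop1} and the lemmas are symmetric in $(\alpha,q)\leftrightarrow(\beta,t)$. The main obstacle I anticipate is the case analysis in the sharpening step: cleanly ruling out a pair $(a_{j+1},a_{j+2})=(2,\ge 2)$ in the interior requires carefully tracking which of the three positional cases of Proposition~\ref{prop1} the successive letters $Q^j,Q^{j+1},\dots$ fall into, and using that the "boundary" letters $B^{n-1}_{s-1}$, $B^{n+k}_{s+m}$ pin down $\psi_\beta$ at the two ends so that at least one interior $Q$ must land in case~(ii) or force a contradiction with Lemma~\ref{lemma3}. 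Everything else is a routine application of the already-established Proposition~\ref{prop1} and Lemmas~\ref{lemma1}, \ref{lemma3}, \ref{lemma6}.
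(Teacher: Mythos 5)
Your first step (applying Proposition~\ref{prop1} to each intervening letter $Q^j$, $j\in\{n,\dots,n+k-1\}$, to get $a_{j+1}\in\{1,2\}$ throughout) is fine and matches the paper's use of Proposition~\ref{prop1} for $a_{n+k}$. The problem is the sharpening step, which you yourself flag as an ``anticipated obstacle'': as written it does not close. If $a_{j+1}=2$ with $j\le n+k-2$, your propagation gives $a_{j+2}\ge 2$ (via $(a_{j+1},a_{j+2})\neq(2,1)$ from case~(i) of Proposition~\ref{prop1} applied to $Q^j$), hence $a_{j+2}=2$, hence $a_{j+3}\ge 2$, ``etc.'' --- but nothing in your argument forces any of the letters $Q^{j+1},Q^{j+2},\dots$ into case~(ii), so the chain of forced $2$'s never produces the contradiction you assert; it simply terminates at $a_{n+k}=2$, which is allowed by the statement. (The chain \emph{can} be killed: once $a_{j+1}=a_{j+2}=2$ with both $Q^j$ and $Q^{j+1}$ in case~(i), the strict bounds $\alpha_{j+1}<K^2$ and $\alpha_{j+2}<K^2$ are incompatible with $\alpha_{j+1}=2+1/\alpha_{j+2}>2+1/K^2=K^2$ --- but you never make this computation, and it is exactly the content of Lemma~\ref{lemma3}, which you instead quote literally with ``$\min$'' even though its proof only establishes that \emph{at least one} of $\alpha_{j+1},\alpha_{j+2}$ is $\ge\sqrt2+1$.) You also misattribute to Lemma~\ref{lemma6} the conclusion ``$a_{j+2}=1$''; that lemma says nothing about partial quotients.

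The paper's route is shorter and avoids all of this: if $a_{n+i_0}=2$ for some $i_0\in\{1,\dots,k-1\}$, Lemma~\ref{lemma3} gives $\alpha_{n+i_0}\ge\sqrt2+1$ or $\alpha_{n+i_0+1}\ge\sqrt2+1$; both $Q^{n+i_0-1}$ and $Q^{n+i_0}$ are among the intervening letters, so Lemma~\ref{lemma6} applies with $C=\sqrt2+1$ at one of these two positions and produces a point $x_0$ with $|\psi_\alpha(x_0)-\psi_\beta(x_0)|\ge C_2\min(\psi_\alpha(x_0),\psi_\beta(x_0))$, directly contradicting the standing assumption~\eqref{eq1}. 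That is the missing idea in your proposal: the contradiction comes from violating~\eqref{eq1} via Lemma~\ref{lemma6}, not from an inconsistency among the partial-quotient constraints alone.
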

\begin{proof}
 We prove the statement only for the partial quotients of $\alpha$. The statement for partial quotients of $\beta$ will follow for reasons of symmetry.\\
We know that $a_{n+k}\in\{1,2\}$ from Proposition \ref{prop1}. It is left to show that $a_{n+1}=\ldots=a_{n+k-1}=1$.
Suppose that this is not true, i.e. that there exist $i_0\in\{1,\ldots k-1\}$, such that $a_{n+i_0}=2$. Then by Lemma \ref{lemma3} we have either $\alpha_{n+i_0}\geq \sqrt2+1$, or $\alpha_{n+i_0+1}\geq \sqrt2+1$. Note that by the conditions of this lemma, there are letters $Q^{n+i_0-1}$ and $Q^{n+i_0}$ in the word $W$. So for one of the two options of $r=n+i_0-1$ or $r=n+i_0$ the conditions of Lemma \ref{lemma6} are satisfied and we can apply it. By Lemma \ref{lemma6} we find $x_0\in[q_{n+i_0-1}-1,q_{n+i_0}]$, such that
$$
 | \psi_\alpha(x_0) - \psi_\beta(x_0) | \geqslant (\sqrt{\sqrt2+1}-1) \cdot \min( \psi_\alpha(x_0), \psi_\beta(x_0) )=C_2\min( \psi_\alpha(x_0), \psi_\beta(x_0) ),
$$
which contradicts the assumption \eqref{eq1}. 
\end{proof}

Using lemmas and propositions above, we will show that under the assumption \eqref{eq1}, there can only be finitely many letters $B$ in the word $W$.\\
Indeed, assume that there are infinitely many letters $B$ in the word $W$. Let $B^{n-1}_{s-1} \dots B^{n+k}_{s+m}$ be an arbitrary subword, where between two consecutive letters $B$ there are $k$ letters $Q$ and $m$ letters $T$. By Proposition \ref{prop2} we already know that $a_{n+i} = 1$ for all $i = 1, \dots, k-1$ and $a_{n+k} \in \{1, 2\}$. The same holds for $b_j$ for $j = 1, \dots, s$. We denote 
\begin{align*}
k' = 
\begin{cases}
k, & a_{n+k} = 1, \\
k+1, & a_{n+k} = 2,
\end{cases}
&\qquad
m' = 
\begin{cases}
m, & b_{s+m} = 1, \\
m+1, & b_{s+m} = 2.
\end{cases}
\end{align*}
All possible cases are presented on Figure \ref{fig1} (without loss of generality, we assume that $k'\geq m'$, because we can always swap roles of $\alpha$ and $\beta$). We now proceed to the elimination of these cases.
\begin{figure}[h]
\begin{tikzpicture}
[
level 1/.style = {sibling distance = 2cm, level distance = 3cm},
level 2/.style = {sibling distance = 2cm, level distance = 4.3cm},
level 3/.style = {sibling distance = 2cm, level distance = 7.1cm},
every node/.append style = {draw},
grow=right,
edge from parent fork right
]

\node {$B^{n-1}_{s-1} \dots B^{n+k}_{s+m}$}
child {node [rectangle split, rectangle split parts=2, inner ysep=1pt]{
		\textbf{Case 2} \strut
		\nodepart{second}
		$k \geqslant 1, \ m \geqslant 1$ \strut
	}
	child {node [rectangle split, rectangle split parts=3, xshift=-0.15cm, yshift=-0.3cm]{
			\textbf{Case 2.4}
			\nodepart{second}
			$k' = m' + 1, \ k' \leqslant 3$
			\nodepart{third}
			$a_{n+k} = 1, \ b_{s+m} = 1$ 
		}
	}
	child {node [rectangle split, rectangle split parts=3]{
			\textbf{Case 2.3}
			\nodepart{second}
			$k' = m' +1, \ k' \leqslant 3$
			\nodepart{third}
			$a_{n+k} = 2$ or $b_{s+m} = 2$
		}
		child {node 
			[rectangle split, rectangle split parts=2, xshift=-1.7cm, inner ysep=1pt]{
				\textbf{Case 2.3.b} \strut
				\nodepart{second}
				$k' = 2, \ m' = 1$ \strut 
			}			
		}
		child {node [rectangle split, rectangle split parts=2, xshift=-1.7cm, inner ysep=1pt]{
				\textbf{Case 2.3.a} \strut
				\nodepart{second}
				$k' = 3, \ m' = 2$ \strut
			}	
		}
	}
	child {node [rectangle split, rectangle split parts=2, xshift=0.98cm, inner ysep=1pt]{
			\textbf{Case 2.2} \strut
			\nodepart{second}
			$k' > m' + 1$ or $k' = m' + 1, \ k' \geqslant 4$ \strut
		}
	}
	child {node [rectangle split, rectangle split parts=2, xshift=-0.9cm, yshift=-0.2cm, inner ysep=1pt]{
			\textbf{Case 2.1} \strut
			\nodepart{second}
			$k' = m'$ \strut
		}
	}
}
child {node [rectangle split, rectangle split parts=2, inner ysep=1pt]{
		\textbf{Case 1} \strut
		\nodepart{second}
		$k \geqslant 1, \ m = 0$ \strut
	}
};

\end{tikzpicture}
\caption{Partition into cases}
\label{fig1}
\end{figure}
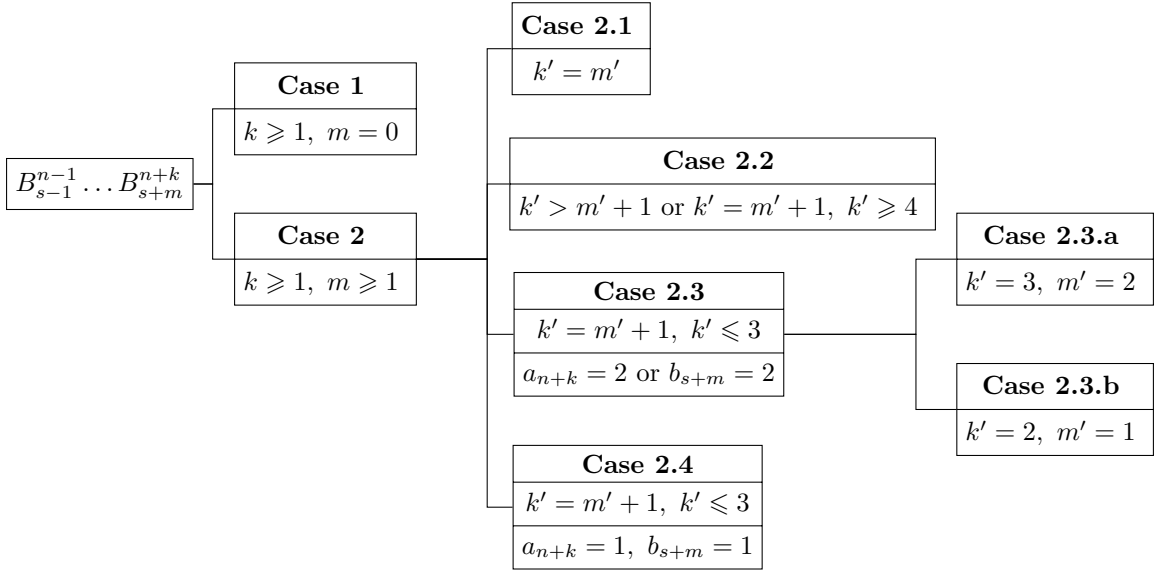

 \underline{Case 1:} Subwords of the form $B^{n-1}_{s-1}Q^n \ldots B^{n+k}_{s}$, i.e. there are only $k\geq1$ letters $Q^*$ between two consecutive letters $B$. 
 If $k=1$, then from Lemma \ref{lemma2} we know $a_{n+1}=2$. If $k\geq2$, then from Lemma \ref{prop2}, we know that $a_{n+1}=1$ and $a_{n+i}\leq 2$ for $i=2,\ldots k$. Combining all the information in this case, we have
 
 \begin{gather}
 \label{eq7}
 t_{s-1} = q_{n-1}, \\
 \label{eq8}
 t_s = q_{n+k} = a_{n+k} q_{n+k-1} + q_{n+k-2}=
 \begin{cases}
 2q_n+q_{n-1}, & k =1, \\
 (a_{n+k}a_{n+k-1}+1)q_{n+k-2}+a_{n+k}q_{n+k-3}\geq 2q_n+q_{n-1} , & k\geq2,
 \end{cases} \\
 \label{eq9}
 \frac{q_n}{q_{n-1}} > 1, \\
 \label{eq10}
 \alpha_{n+1}=a_{n+1}+\frac{1}{a_{n+2}+\ldots} > 
 \begin{cases}
 a_{n+1}=2 >  \frac{4}{3}, & k =1, \\
 a_{n+1}+\frac{1}{a_{n+2}+1}>1+\frac{1}{3}=\frac{4}{3} , & k\geq2.
 \end{cases}
 \end{gather}
As for $k=1$ we have $a_{n+1} = 2$, it follows from Proposition \ref{prop1} that  $\xi_{n-1} > \eta_{s-1}$. If $k > 1$, the same follows from Lemma \ref{lemma4}, so under the assumption (\ref{eq1}) we have
 \begin{equation*}
 \xi_{n-1} < K \eta_{s-1},
 \end{equation*}
 or, by (\ref{defal}),
 \begin{equation*}
 t_s + \frac{t_{s-1}}{\beta_{s+1}} < K (q_n + \frac{q_{n-1}}{\alpha_{n+1}}).
 \end{equation*}
 From (\ref{eq7}), (\ref{eq8}), (\ref{eq9}) and (\ref{eq10}) we get
 \begin{equation*}
 \frac{1}{\beta_{s+1}} < (K - 2) \frac{q_n}{q_{n-1}} + \frac{K}{\alpha_{n+1}} - 1 < (K-2) +\frac{3K}{4}-1< 0,
 \end{equation*}
 which is a contradiction.

\underline{Case 2:}
 Subwords of the form $B^{n-1}_{s-1}\dots B^{n+k}_{s+m}$, where between two consecutive letters $B^{n-1}_{s-1}$ and $B^{n+k}_{s+m}$ there are $k \geq 1$ letters $Q^*$ and $m \geq 1$ letters $T_*$ in any order.

We know that 
\begin{equation}
	\label{eq2}
	\langle a_1, \ldots , a_{n-1} \rangle =  q_{n-1} = t_{s-1} = \langle b_1, \ldots , b_{s-1} \rangle 
\end{equation}
and
\begin{equation*}
	\langle a_1, \ldots, a_{n+k} \rangle = q_{n+k} = t_{s+m} =  \langle b_1, \ldots, b_{s+m} \rangle.
\end{equation*}
By Proposition \ref{prop2} either $a_{n+i} = 1$ for all $i = 1, \dots, k$ or $a_{n+i} = 1$ for $i = 1, \dots, k-1$ and $a_{n+k} = 2$. The same holds for $b_{s+j}, j=1,\ldots,m$. 

Denote by $\langle a_1, \ldots, a_n \rangle $ the denominator $q_n$ of the continued fraction $[0;a_1,\ldots, a_n]$.
Note that by the recursive formula for denominators, we have
$$
  \langle a_1, \ldots, a_n,2 \rangle =2 \langle a_1, \ldots, a_n \rangle +  \langle a_1, \ldots, a_{n-1} \rangle =    \langle a_1, \ldots, a_n, 1 \rangle +  \langle a_1, \ldots, a_n \rangle = \langle a_1, \ldots, a_n, 1,1 \rangle .
$$
So we can write denominator $q_{n+k}$ as
\begin{equation*}
	q_{n+k} = \langle a_1, \ldots, a_n, \underbrace{1, \ldots, 1}_{k'\ge1 \text{ times}} \rangle 
\end{equation*}
and, similary,
\begin{equation*}
	t_{s+m} = \langle b_1, \ldots, b_s, \underbrace{1, \ldots, 1}_{m'\ge1 \text{ times}}  \rangle .
\end{equation*}
It is a well-known fact that
\begin{equation*}
	 \langle u_1,\ldots, u_i, v_1,\ldots, v_j \rangle = \langle u_1,\ldots u_i \rangle \langle v_1\ldots v_j \rangle + \langle u_1,\ldots u_{i-1} \rangle \langle v_2\ldots v_j \rangle,
\end{equation*}
so applying it to $q_{n+k} = t_{s+m}$ we get
\begin{equation*}
\langle a_1, \ldots a_{n} \rangle \langle \underbrace{1, \ldots, 1}_{k' \text{ times}} \rangle + 
\langle a_1, \ldots a_{n-1} \rangle \langle \underbrace{1, \ldots, 1}_{k'-1 \text{ times}} \rangle 
=\langle b_1, \ldots b_{s} \rangle \langle \underbrace{1, \ldots, 1}_{m' \text{ times}} \rangle + 
\langle b_1, \ldots b_{s-1} \rangle \langle \underbrace{1, \ldots, 1}_{m'-1 \text{ times}} \rangle,
\end{equation*}
or, for brevity,

\begin{equation*}
	q_{n}F_{k'+1} + q_{n-1} F_{k'} = t_{s} F_{m'+1} + t_{s-1} F_{m'},
\end{equation*}
where $F_n$ is the $n$th Fibonacci number, that is 
$F_{1} = F_2=1, \ F_{n+1}=F_n+F_{n-1}.$

From this equality and the fact that $t_{s-1} = q_{n-1}$ we can express $t_s$ as
\begin{equation}
	\label{eq3}
	t_s = \frac{F_{k'+1}}{F_{m'+1}}q_n + q_{n-1} \frac{F_{k'} - F_{m'}}{F_{m'+1}}.
\end{equation}

If $k\geq2$ and $k' \geq m'+1$, then $q_{n+1} \neq q_{n+k}$
 and we have
 \begin{equation}
 \label{qqt}
 q_{n+1} < t_s.
 \end{equation}
  Indeed,
\begin{multline*}
F_{m'+1} (t_s - q_{n+1})= F_{k'+1}q_n + q_{n-1} (F_{k'} - F_{m'})  - F_{m'+1} q_n - F_{m'+1} q_{n-1}   \\
= q_n (F_{k'+1} - F_{m'+1}) + q_{n-1} (F_{k'} - F_{m'} - F_{m'+1})  \\
 \geq q_n (F_{m'+2} - F_{m'+1}) + q_{n-1} (F_{m'+1} - F_{m'} - F_{m'+1})  = q_n F_{m'} -  q_{n-1} F_{m'} > 0,
\end{multline*}
and (\ref{qqt}) is proven.

Now we consider several subcases.

\underline{Case 2.1:} $k' = m'$. 

By (\ref{eq3}) we have $t_s = q_n$ and together with $t_{s-1}=q_{n-1}$ it is a contradiction with Lemma \ref{lemma1}.

\underline{Case 2.2:} $k' > m' + 1$ or $k' = m'+1$ with $ k' \geq 4$.

By (\ref{qqt}) we have a subword $B^{n-1}_{s-1} Q^{n}Q^{n+1} \dots B^{n+k}_{s+m}$ and by Lemma \ref{lemma4} we have $\xi_{n-1} > \eta_{s-1}$.

By assumption (\ref{eq1}) we have
\begin{equation*}
	\xi_{n-1} - \eta_{s-1} < C_2 \eta_{s-1},
\end{equation*}
or
\begin{equation*}
	\xi_{n-1} < K \eta_{s-1}.
\end{equation*}
 By (\ref{defal}) we can rewrite it as
\begin{equation*}
	t_s + \frac{t_{s-1}}{\beta_{s+1}} < K (q_n + \frac{q_{n-1}}{\alpha_{n+1}}),
\end{equation*}
and by (\ref{eq2}) and (\ref{eq3}) we get
\begin{equation}
	\label{eq5}
	\frac{1}{\beta_{s+1}} < \left(K - \frac{F_{k'+1}}{F_{m'+1}} \right) \frac{q_n}{q_{n-1}} + \frac{K}{\alpha_{n+1}} - \frac{F_{k'} - F_{m'}}{F_{m'+1}}.
\end{equation}

In this case we have 
\begin{equation}
\label{fib_bound1}
	\frac{F_{k'+1}}{F_{m'+1}} \geqslant
	\begin{cases}
		8/5, & k' = m' + 1,  \ k' > 4, \\
		5/3, & k' = m' + 1,  \ k' = 4, \\
		5/2, & k' > m' + 1,
	\end{cases}
\end{equation}
and
\begin{equation}
\label{fib_bound2}
	\frac{F_{k'} - F_{m'}}{F_{m'+1}} \geqslant
	\begin{cases}
		3/8,  &  k' = m' + 1, \ k' > 4, \\
		1/3, & k' = m' + 1, \ k' = 4, \\
		1, & k' > m' + 1.
	\end{cases}
\end{equation}

In the case $k' = m' + 1, \ k' > 4$ we have $a_{n+1} = a_{n+2} = a_{n+3} = 1$ and $a_{n+4} = 1$  or $a_{n+4} = 2$, and $b_{s+1} = b_{s+2} = 1$ and $b_{s+3} = 1$  or $b_{s+3} = 2$, so
\begin{equation}
	\label{bound1}
	\alpha_{n+1} = 1 + \cfrac{1}{1 + \cfrac{1}{1 + \cfrac{1}{a_{n+4} + \dots}}} > 1 + \cfrac{1}{1 + \cfrac{1}{1 + \cfrac{1}{2 + 1}}} = \frac{11}{7},
\end{equation}
\begin{equation}
\label{bound2}
\beta_{s+1} = 1 + \cfrac{1}{1 + \cfrac{1}{b_{s+3} + \dots}} < 1 + \cfrac{1}{1 + \cfrac{1}{2 + 1}} = \frac{7}{4}.
\end{equation}

In the case $k' = m' + 1, \ k' = 4$ there are two possible combinations for partial quotients of  $\alpha$: 
\begin{enumerate}
	\item $a_{n+1} = a_{n+2} = a_{n+3} = a_{n+4} = 1$,
	\item $a_{n+1} = a_{n+2} = 1, \ a_{n+3} = 2$,
\end{enumerate}
and two possible combinations for partial quotients of $\beta$: 
\begin{enumerate}
	\item $b_{s+1} = b_{s+2} = b_{s+3} = 1$,
	\item ${b_{s+1} = 1,\ b_{s+2} = 2}$.
\end{enumerate}
In the same manner it is easy to check that for these cases boundaries (\ref{bound1}) and (\ref{bound2}) are also true, so
\begin{equation}
	\label{bound3}
	\frac{1}{\alpha_{n+1}} < 
	\begin{cases}
		7/11, & k' = m' + 1,  \ k' \geq 4, \\
		1, & k' > m' + 1, \\
	\end{cases}
\end{equation}
\begin{equation}
\label{eq4}
\frac{1}{\beta_{s+1}} > 
\begin{cases}
4/7, & k' = m' + 1,  \ k' \geq 4, \\
0, & k' > m' + 1.
\end{cases}
\end{equation}

Substituting (\ref{fib_bound1}), (\ref{fib_bound2}), (\ref{bound3}) and $\frac{q_n}{q_{n-1}} > 1$ in (\ref{eq5}) we get
\begin{equation*}
	\frac{1}{\beta_{s+1}} < 
	\begin{cases}
		0.568, & k' = m' + 1,  \ k' > 4, \\
		 0.55, & k' = m' + 1,  \ k' = 4, \\
		 0,  & k' > m' + 1,
	\end{cases}	
\end{equation*}
which contradicts (\ref{eq4}).

\underline{Case 2.3:} $k'=m'+1, \ k' \leq 3$ with either $a_{n+k}=2$ or $b_{s+m}=2$. 

We consider several subcases here.

\underline{Case 2.3.a:} $k' = 3, \ m' = 2$. 

Under the conditions of this case we have three possible combinations for partial quotiens: 
\begin{enumerate}
	\item $a_{n+1} = a_{n+2} = a_{n+3} = 1, \ b_{s+1} = 2,$
	\item $a_{n+1} =1, \ a_{n+2} = 2, \ b_{s+1} = 2,$
	\item $a_{n+1} = 1, \ a_{n+2} = 2, \ b_{s+1} = b_{s+2} = 1$.
\end{enumerate}

By (\ref{eq3}) we have
\begin{equation}
\label{eq17}
q_{n+1} = q_n + q_{n-1} <
t_s = \frac{3}{2} q_{n} + \frac{1}{2}q_{n-1} < q_{n+2}.
\end{equation}

In the first two cases $b_{s+1} = 2$, and in the third $a_{n+2} = 2$, so by Proposition \ref{prop1} we have that $\eta_{s-1} > \xi_{n+1}$. 
Hence in all cases by assumption (\ref{eq1}) we have
\begin{equation*}
	\eta_{s-1} < K \xi_{n+1}.
\end{equation*}
or by (\ref{defal}) and (\ref{eq17}) we get
\begin{gather}
	\label{eq16}
	\alpha_{n+2} < \frac{(3/2 K - 1) q_n + \left( K / \beta_{s+1} + K / 2\right)q_{n-1}}{q_{n} + q_{n-1}}.
\end{gather}
Under this case conditions we have
\begin{equation}
\label{eq15}
\alpha_{n+2} >
\begin{cases}
	 3/2, & a_{n+2} = 1 \ a_{n+3} = 1, \\
	 2, & a_{n+2} = 2,
\end{cases}
\end{equation}
and
\begin{equation*}
\beta_{s+1} >
\begin{cases}
2, & b_{s+1} = 2, \\
 3/2, & b_{s+1} = b_{s+2} = 1.
\end{cases}
\end{equation*}
Substituting these estimations in (\ref{eq16}) we get
\begin{equation*}
\alpha_{n+2} < 
\begin{cases}
 3/2, & b_{s+1} = 2, \\
 2, & b_{s+1} = b_{s+2} = 1,
\end{cases}
\end{equation*}
which contradicts (\ref{eq15}).

\underline{Case 2.3.b:} $k' = 2, \ m' = 1$. Since $m'= 1$, we have $m=1$ and therefore $b_{s+1} = 1$. So by the assumption of case 2.3 we have $a_{n+1} = 2, \ k = 1$ and so
\begin{align}
	\label{eq11}
	\alpha_{n+1} &> a_{n+1} = 2, \\
	\label{eq6}
	\beta_{s+1} &< b_{s+1} + 1 = 2.
\end{align}

By (\ref{eq3}) we get
\begin{equation}
\label{eq14}
	t_s = 2q_n,
\end{equation}
hence we have a subword $B^{n-1}_{s-1}Q^n T_s B^{n+1}_{s+1}$ and
since $a_{n+1} = 2, \ q_n < t_s$ by Proposition \ref{prop1} we have $\xi_{n-1} > \eta_{s-1}$ and as in the case 2.2 we can rewrite it as
\begin{equation*}
t_s + \frac{t_{s-1}}{\beta_{s+1}} < K (q_n + \frac{q_{n-1}}{\alpha_{n+1}}),
\end{equation*}
or by (\ref{eq14}) and (\ref{eq11})  we get
\begin{equation*}
	\frac{1}{\beta_{s+1}} < \left(K - 2 \right) \frac{q_n}{q_{n-1}} + \frac{K}{\alpha_{n+1}} < 0.34,
\end{equation*}
which contradicts (\ref{eq6}).

\underline{Case 2.4:} $k' = m'+1, \ k' \leq 3$ with both $a_{n+k}=1$ and $b_{s+m}=1$.\\
In this case we will use the fact that we already excluded all possible subwords of the form $B\ldots B$, except for a couple of options. We list all (up to the swap of denominators of $\alpha$ and $\beta$ as for now we do not distinguish them) cases which are left.
\begin{enumerate}
	\item  $B_{s-1}^{n-1}\ldots B_{s+2}^{n+3}$ with  $(a_{n+1}, a_{n+2}, a_{n+3}) = (1,1,1)$ and $(b_{s+1}, b_{s+2}) = (1,1)$;
	\item  $B_{s-1}^{n-1}\ldots B_{s+1}^{n+2}$ with  $(a_{n+1}, a_{n+2}) = (1,1)$ and $b_{s+1} = 1$.
\end{enumerate}
By \eqref{qqt} and the fact that all partial quotients after letters $Q$ and $T$ are equal to $1$, we get that the subwords in each case look like
\begin{enumerate}
	\item  $B_{s-1}^{n-1}Q Q T T Q B_{s+2}^{n+3}$;
	\item  $B_{s-1}^{n-1}Q Q T B_{s+1}^{n+2}$
\end{enumerate}
respectively, where we omitted indices of the letters $Q$ and $T$.

We will show that both subwords $B_{s-1}^{n-1}Q Q T B_{s+1}^{n+2}$ and $B_{s-1}^{n-1}Q Q T T Q B_{s+2}^{n+3}$ can not occur. Let us deal with the case $B_{s-1}^{n-1}Q Q T B_{s+1}^{n+2}$ first. By \eqref{eq3} we have
$$
t_s = 2q_n
$$
and by Lemma \ref{lemma4} we have $\xi_{n-1}>\eta_{s-1}$. Note that we always have $b_{s+3}=1$ as there will be a letter $T_{s+2}$ in any of the four (here we took into consideration swap of denominators of $\alpha$ and $\beta$)  potential options of subwords after the subword $B_{s-1}^{n-1}Q Q T B_{s+1}^{n+2}$. Using this and $b_{s+2}\geq1$ we come to
$$
\beta_{s+1} = b_{s+1} + \cfrac{1}{b_{s+2} + \cfrac{1}{b_{s+3} + \dots}} < 1 + \cfrac{1}{1 + \cfrac{1}{1 + 1}} = \frac{5}{3}.
$$
We also have
$ \alpha_{n+1}>3/2$ and $\frac{q_n}{q_{n-1}}>1$. As before, we get
\begin{equation*}
\frac{3}{5}<\frac{1}{\beta_{s+1}} < \left(K - 2 \right) \frac{q_n}{q_{n-1}} + \frac{K}{\alpha_{n+1}} < (K-2)+\frac{K}{3/2}<0.59,
\end{equation*}
which is a contradiction.  \\

For the subword  $B_{s-1}^{n-1}Q Q T T Q B_{s+2}^{n+3}$, by \eqref{eq3} we have
$$
t_s = \frac{3}{2} q_{n} + \frac{1}{2}q_{n-1}.
$$
By Lemma \ref{lemma4} we get $\eta_{s-1} > \xi_{n+1}$. As before, we come to 
\begin{equation*}
	\frac{1}{t_s + \frac{t_{s-1}}{\beta_{s+1}}} < \frac{K}{q_{n+1}\alpha_{n+2} + q_n},
\end{equation*}
or equivalently to
\begin{equation}\label{eq244}
(q_n + q_{n-1}) \alpha_{n+2} + q_n < K \left( \frac{3}{2} q_{n} + \frac{1}{2}q_{n-1} \right) + K \frac{q_{n-1}}{\beta_{s+1}}.
\end{equation}
Note that we always have $b_{s+4}=1$ as there will be a letter $T_{s+3}$ in any of the four (here we took into consideration swap of denominators of $\alpha$ and $\beta$) potential options of subwords after the subword  $B_{s-1}^{n-1}Q Q T T Q B_{s+2}^{n+3}$. Using this and $b_{s+3}\geq1$ we get
$$
\beta_{s+1} = b_{s+1} + \cfrac{1}{b_{s+2} + \cfrac{1}{b_{s+3} + \cfrac{1}{b_{s+4} +\dots}}} > 1 + \cfrac{1}{1 + \cfrac{1}{1  + \cfrac{1}{1 + 1}}} = \frac{8}{5}.
$$
By the same reason we will always have $a_{n+5}=1$, as there always will be a letter $Q^{n+4}$ in the next subword. Using this and the fact that $a_{n+2}=a_{n+3}=1, a_{n+4}\geq1$, we can bound
$$
\alpha_{n+2} = a_{n+2} + \cfrac{1}{a_{n+3} + \cfrac{1}{a_{n+4} + \cfrac{1}{a_{n+5} +\dots}}} > 1 + \cfrac{1}{1 + \cfrac{1}{1  + \cfrac{1}{1 + 1}}} = \frac{8}{5}.
$$ 
Substituting these bounds on tails of continued fractions into \eqref{eq244}, we come to
$$
1.6<\alpha_{n+2}<\frac{q_n (\frac{3K}{2}-1) +q_{n-1}\left(\frac{K}{2}+\frac{K}{\beta_{s+1}}\right)}{q_n+q_{n-1}}<\frac{1.331q_n+1.75q_{n-1}}{q_n+q_{n-1}}<\frac{1.531q_n+1.55q_{n-1}}{q_n+q_{n-1}}<1.55,
$$
which is a contradiction.

We came to a contradiction in every possible case of subwords of the form $B\ldots B$, so under the assumption \eqref{eq1} there are only finitely many letters $B$ in the word $W$. 

We can now finalize the proof of Theorem \ref{maintheorem}. Recall that under the assumption that the statement of this theorem is not true, we are deducing a contradiction.
By Proposition \ref{prop1} all partial quotients for $\alpha$ and $\beta$ are equal to $1$ or $2$ for large enough indices. By the conditions of the theorem we also know that one of the numbers, say $\alpha$, is not equivalent to $\frac{\sqrt5+1}{2}$. Hence there are infinitely many partial quotients $a_n=2$. By Lemma \ref{lemma3} it means that
 $$
\min ( \alpha_{n}, \alpha_{n+1} ) \geq \sqrt2+1.
$$
Take an index $j\in\{n,n+1\}$ for which $a_j\geq\sqrt2+1$ and apply Lemma \ref{lemma6} for $r=j$ to get a contradiction with  \eqref{eq1}. We came to a contradiction in all potential cases, hence the assumption was false. \qed

\section{Proof of Theorem \ref{optimality}}\label{sec4}
We want to show that the constant $C_2$ is optimal. Consider two irrational numbers, $\theta = \sqrt{2} + 1 = [2; 2, 2, \dots]$ and
$\omega$, which we will define below.

First, by classical Kronecker's approximation theorem in one-dimensional case, we know that for every $\varepsilon > 0$ there exist $U,V \in\mathbb{Z}$, such that
\begin{equation*}
	\left | V + \frac{U}{\theta} - \sqrt{\theta} \right | < \varepsilon.
\end{equation*} 
Define the sequence $X_n$ as
\begin{equation*}
	X_0 = U, \,\,\, X_1 = V, \,\,\, X_{n+1} = 2X_n + X_{n-1} \text{\, for every\,\,\,} n\ge 1. 
\end{equation*}
Solving a linear recurrence relation, we get
\begin{equation*}
	X_n = A \theta^n + B (-\theta)^{-n}, \text{\quad where } A=\frac{V + U \theta^{-1}}{2 \sqrt{2}}\approx \frac{\sqrt\theta}{2\sqrt2}\approx0.549^+.
\end{equation*}
As $A>0$, there exists $k\in\mathbb{Z_+}$, such that $X_{k-1}, X_k \geqslant 1$. Let
\begin{equation*}
	\frac{X_{k-1}}{X_k} = [0; b_l,\ldots, b_1], \quad b_j \in\mathbb{Z_+}
\end{equation*}
and define $\omega$ as 
\begin{equation*}
	\omega = [0; b_1,\ldots, b_l, \overline{2} ].
\end{equation*}
We can show that for every $\varepsilon'>0$ the inequality
\begin{equation}\label{keps}
\Bigl | \psi_\theta(t) - \psi_\omega(t) \Bigl | \leqslant (C_2+\varepsilon') \cdot \min(\psi_{\theta}(t), \psi_{\omega}(t))
\end{equation}
holds for all $t$ large enough.\\
We know that denominator $Q_n$ of convergent $P_n/Q_n$ to $\theta$ is equal to
\begin{equation*}
	Q_n = \frac{1}{4} \Bigl( (2 + \sqrt{2}) \theta^n + (2 - \sqrt{2})(-\theta)^{-n} \Bigr).
\end{equation*} 
Denominator $s_n$ of convergent $r_n/s_n$ to $\omega$ is equal to
\begin{equation*}
s_n = X_{n-n_0}, \quad n_0 = k-l
\end{equation*}
for $n$ large enough. Note, that 
\begin{equation*}
\frac{Q_n}{X_n} \sim \frac{2 + \sqrt{2}}{4A} =\frac{1}{ (\sqrt{2} - 1) (V + U \theta^{-1})}\approx\sqrt\theta. 
\end{equation*} 
We use
\begin{equation*}
Q_{n-1}<X_n < Q_n  <  X_{n+1} 
\end{equation*} 
and 
\begin{gather*}
\psi_\theta(t) := \xi_n = \frac{1}{Q_{n+1} + \frac{Q_n}{\theta_{n+2}}} \sim \frac{1}{Q_n \left(\theta + \frac{1}{\theta} \right) } \sim \frac{1}{2 \sqrt{2} Q_n}, \quad Q_n \leqslant t < Q_{n+1}, \\
\psi_\omega(t) := \eta_n = \| s_{n+n_0} \eta \|
\sim \frac{1}{2 \sqrt{2} X_n}, \quad X_n \leqslant t < X_{n+1}, \\
\min(\psi_{\theta}(t), \psi_{\omega}(t)) = \begin{cases}
	\eta_n, &X_n \leqslant t < Q_n, \\
	\xi_{n}, & Q_n \leqslant t < X_{n+1}.
\end{cases}
\end{gather*}
Combining everything we get
\begin{equation*}
\frac{\xi_{n-1}}{\eta_n} \sim \frac{X_n}{Q_{n-1}} = \sqrt\theta + O(\varepsilon), \quad \frac{\eta_{n}}{\xi_n} \sim \frac{Q_n}{X_{n}} = \sqrt\theta + O(\varepsilon).
\end{equation*} 
Latter is equivalent to 
$$
 \xi_{n-1} \sim \sqrt\theta\eta_n,  \,\,\,\,  \eta_n\sim \sqrt\theta \xi_n,
$$
meaning that for all $X_n \leqslant t < Q_n$ we have
$$
\Bigl | \psi_\theta(t) - \psi_\omega(t) \Bigl | = |  \xi_{n-1} -\eta_n  | \sim (\sqrt\theta-1)\eta_n = C_2 \eta_n
$$
and for all $Q_n \leqslant t < X_{n+1}$ we have
$$
\Bigl | \psi_\theta(t) - \psi_\omega(t) \Bigl | = | \eta_n - \xi_n | \sim (\sqrt\theta-1)\xi_n = C_2 \xi_n.
$$
Hence (\ref{keps}) is proven.
\qed

\section{Final remarks}\label{sec5}
In the previous section, we showed that there exists a pair of numbers $\theta\sim_c\omega\sim_c\sqrt2$, such that $C_{\theta,\omega}=C_2$. However, as we will show next, this is not true for all pairs of numbers equivalent to $\sqrt2$.
\begin{remark}\label{remark1}
\textit{For each constant $C_2'\in[\sqrt{\sqrt2+1}-1,\sqrt2)$, there exists a pair of numbers $(\theta,\omega)$ with $\theta\pm\omega\notin\Z$, such that $\theta\sim_c\omega\sim_c\sqrt2$ and 
$$
C_{\theta,\omega}=C_2'.
$$
 }
\end{remark}
\begin{proof}
Construction from the proof of Theorem \ref{optimality} can be generalized in a following way. Fix $0<x<1$. We build $\omega=\omega(x)\sim_c\sqrt2$  using the following procedure: for any $\varepsilon>0$ there exists $U,V \in\mathbb{Z}$, such that
\begin{equation*}
	\left | V + \frac{U}{\theta} - \theta^{x} \right| < \varepsilon.
\end{equation*} 
Proceeding as in the in the proof of Theorem \ref{optimality}, one comes to
$$
\frac{Q_n}{X_n} \sim\theta^{1-x}. 
$$
Finally, we will get two different types of distances between $\psi_\omega(t)$ and $\psi_\theta(t)$. Namely, for all $X_n \leqslant t < Q_n$ we have
$$
\Bigl | \psi_\theta(t) - \psi_\omega(t) \Bigl | = |  \xi_{n-1} -\eta_n  | \sim (\theta^x-1)\eta_n,
$$
and for all $Q_n \leqslant t < X_{n+1}$ we have
$$
\Bigl | \psi_\theta(t) - \psi_\omega(t) \Bigl | = | \eta_n - \xi_n | \sim (\theta^{1-x}-1)\xi_n.
$$
This means that for $\theta$ and $\omega(x)$ there are infinitely many $t\in\N$, for which
$$
\Bigl | \psi_\theta(t) - \psi_\omega(t) \Bigl | \sim C(x) \min(\psi_\theta(t),\psi_\omega(t)),
$$
where $C(x)=\max(\theta^{x},\theta^{1-x})-1$. Moreover, for all $\epsilon>0$, for large enough $t$ we have
$$
\Bigl | \psi_\theta(t) - \psi_\omega(t) \Bigl | < (C(x)+\epsilon) \min(\psi_\theta(t),\psi_\omega(t)).
$$
As $x\in(0,1)$, we get statement of this remark.
\end{proof}
In the case $\alpha\sim_c\beta\sim_c\frac{\sqrt5+1}{2}=\tau$ we can do the process as in Remark \ref{remark1}. Omitting the details, we formulate a final statement
\begin{remark}\label{remark2}
\textit{For each constant $C_1'\in[\sqrt\tau-1,\tau-1)$, there exists a pair of numbers $(\alpha,\beta)$ with $\alpha\pm\beta\notin\Z$, such that $\alpha\sim_c\beta\sim_c\tau$ 
and 
$$
C_{\alpha,\beta}=C_1'.
$$
 }
\end{remark}
As $\tau-1>\sqrt{\sqrt2+1}-1$, from Remark \ref{remark1} and Remark \ref{remark2} we see that there exists a pair $\alpha\sim_c\beta\sim_c\tau$ and a pair $\theta\sim_c\omega\sim_c\sqrt2$, such that
$$
C_{\alpha,\beta} > C_{\theta,\omega}.
$$
Surprisingly, we also have the following statement.
\begin{remark}\label{remark3}
For any pair $(\alpha,\beta)$, such that $\alpha\sim_c\tau$ and $\beta\sim_c\sqrt2$ we have $C_{\alpha,\beta}>C_{\alpha',\beta'}$ for any pair $\alpha'\sim_c \beta'\sim_c \tau$. 


\end{remark}

\begin{proof}
We know that $\alpha\sim_c\tau$ and $\beta\sim_c\theta$. First, we can show that there are infinitely many subwords $QQ$ in a word $W$ for such $\alpha$ and $\beta$. Indeed, assume that $q_n \leqslant t_s$, then \begin{equation*}
q_{n+1} \sim \tau q_n < t_{s+1} \sim \theta t_s,
\end{equation*}
where $q_n, \, q_{n+1}$ and $t_s, \, t_{s+1}$ are denominators of convergents to $\tau$ and $\theta$. If $q_{n+2} < t_{s+1}$ then the claim is proven. So one can assume  
\begin{equation*}
	q_n \leqslant t_s < q_{n+1} < t_{s+1} < q_{n+2}.
\end{equation*}
Then we have
\begin{equation*}
q_{n+3} \sim \tau^3 q_n \approx 4.236 q_n < 5.83 t_s \approx  \theta^2 t_s \sim t_{s+2},
\end{equation*}
so $t_{s+1} < q_{n+2} < q_{n+3} < t_{s+2}$, and the claim is proven.

Now, consider a subword $T_{s}Q^{n}Q^{n+1}$ or $B_s^{n-1}Q^nQ^{n+1}$, which occurs infinitely many times by the fact that $QQ$ occurs infinitely many times. Then either $\xi_{n-1} > \xi_{n} \geqslant \eta_{s}$ or $\eta_{s} \geqslant \xi_{n} > \xi_{n+1}$. In the first case we have 
\begin{equation*}
	\xi_{n-1} - \eta_s \geqslant \xi_{n-1} - \xi_n = (\tau - 1) \xi_n > C_{\alpha', \beta'} \cdot \eta_s \  \text{ for all } \ \alpha' \sim_c \beta'\sim_c \tau,
\end{equation*}
where the last inequality comes from the statement of Remark \ref{remark2}. Case $\eta_{s} \geqslant \xi_{n} > \xi_{n+1}$ follows similar steps.

\end{proof}

\subsection*{Acknowledgements}
This research began during the ``Diophantine Analysis, Dynamics and Related Topics'' conference in Technion, Israel. Both authors thank Nikolay Moshchevitin for organising this event and careful reading of earlier versions of the manuscript. Nikita Shulga thanks Mumtaz Hussain for funding his travel to this conference.

Second author is a scholarship holder of "BASIS" Foundation for Advancement of Theoretical Physics and Mathematics and was partially supported by the Russian Science Foundation, grant no. 22-41-02028.

\bibliographystyle{abbrv}

\end{document}